\numberwithin{equation}{section}
\newtheorem{theorem}{Theorem}
\newtheorem{lemma}[theorem]{Lemma}
\newtheorem{proposition}[theorem]{Proposition}
\newtheorem{corollary}[theorem]{Corollary}
\begin{document}
 \title{The simplicity of the first spectral radius of a meromorphic map}
 \author{Tuyen Trung Truong}
    \address{Department of mathematics, Syracuse University, Syracuse NY 13244, USA}
 \email{tutruong@syr.edu}
\thanks{}
    \date{\today}
    \keywords{Dynamical degrees, $1$-stable}
    \subjclass[2010]{37F, 14D, 32U40, 32H50}
    \begin{abstract}Let $X$ be a compact K\"ahler manifold and let $f:X\rightarrow X$ be a dominant rational map which is $1$-stable. Let $\lambda _1$ and $\lambda _2$ be the first and second dynamical degrees of $f$. If $\lambda _1^2>\lambda _2$, then we show that $\lambda _1$ is a simple eigenvalue of $f^*:H^{1,1}(X)\rightarrow H^{1,1}(X)$, and moreover the unique eigenvalue of modulus $>\sqrt{\lambda _2}$. A variant of the result, where we consider the first spectral radius in the case the map $f$ may not be $1$-stable, is also given. An application is stated for bimeromorphic selfmaps of $3$-folds.  
    
In the last section of the paper, we prove analogs of the above results in the algebraic setting, where $X$ is a projective manifold over an algebraic closed field of characteristic zero, and $f:X\rightarrow X$ is a rational map. Part of the section is devoted to defining dynamical degrees in the algebraic setting. We stress that here the dynamical degrees of rational maps can be defined over any algebraic closed field, not necessarily of characteristic zero.      
\end{abstract}
\maketitle
\section{Introduction}
 Let $X$ be a compact K\"ahler manifold of dimension $k$ with a K\"ahler form $\omega _X$, and let $f:X\rightarrow X$ be a dominant meromorphic map. For $0\leq p\leq k$, the $p$-th dynamical degree $\lambda _p(f)$ of $f$ is defined as follows 
\begin{eqnarray*} 
\lambda _p(f)=\lim _{n\rightarrow\infty}(\int _X(f^n)^*(\omega _X^p)\wedge \omega _X^{k-p})^{1/n}=\lim _{n\rightarrow\infty}r_p(f^n)^{1/n},
\end{eqnarray*}
where $r_p(f^n)$ is the spectral radius of the linear map $(f^n)^*:H^{p,p}(X)\rightarrow H^{p,p}(X)$ (see Russakovskii-Shiffman \cite{russakovskii-shiffman} for the case where $X=\mathbb{P}^k$, and Dinh-Sibony \cite{dinh-sibony10}\cite{dinh-sibony1} for the general case; see also Guedj \cite{guedj4} and Friedland \cite{friedland}). The dynamical degrees are log-concave, in particular $\lambda _1(f)^2\geq \lambda _2(f)$. In the case $f^*:H^{2,2}(X)\rightarrow H^{2,2}(X)$ preserves the cone of psef classes (i.e. those $(2,2)$ cohomology classes which can be represented by positive closed $(2,2)$ currents), then we have an analog $r_1(f)^2\geq r_2(f)$ (see Theorem \ref{TheoremTheCaseNotStable}).

The present paper concerns the first dynamical degree $\lambda _1(f)$ and more generally the first spectral radius $r_1(f)$. We will say that $f$ is $1$-stable if for any $n\in \mathbb{N}$, $(f^n)^*=(f^*)^n$ on $H^{1,1}(X)$ (the first use of this notion appeared in the paper Fornaess-Sibony \cite{fornaess-sibony1} in the case of rational selfmaps of projective spaces). When $f$ is $1$-stable, we have $\lambda _1(f)=r_1(f)$. The first main result of this paper is the following
\begin{theorem} Let $X$ be a compact K\"ahler manifold of dimension $k$, and let $f:X\rightarrow X$ be a dominant meromorphic map which is $1$-stable. Assume that $\lambda _1(f)^2>\lambda _2(f)$. Then $\lambda _1(f)$ is a simple eigenvalue of $f^*:H^{1,1}(X)\rightarrow H^{1,1}(X)$. Further, $\lambda _1(f)$ is the only eigenvalue of modulus greater than $\sqrt{\lambda _2(f)}$.  
\label{TheoremFirstDynamicalDegreeIsSimple}\end{theorem}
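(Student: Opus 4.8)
The plan is to combine a Perron--Frobenius argument on the nef cone with the Hodge index theorem, the hypothesis $\lambda _1(f)^2>\lambda _2(f)$ entering through the multiplicative behaviour of the pullback on $H^{2,2}(X)$.

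First I would use $1$-stability to set $A:=f^*|_{H^{1,1}(X)}$, so that $A^n=(f^n)^*$ and $\lambda _1(f)=r_1(f)$ is the spectral radius of $A$. Since $f^*$ preserves the closed, salient, full-dimensional nef cone of $H^{1,1}(X;\mathbb{R})$, the Perron--Frobenius theorem for cone-preserving operators produces a nonzero nef class $\theta$ with $f^*\theta=\lambda _1(f)\theta$. I then introduce the symmetric bilinear form $Q(\alpha,\beta)=\int _X\alpha\wedge\beta\wedge\omega _X^{k-2}$ on $H^{1,1}(X;\mathbb{R})$, which by the Hodge index theorem has exactly one positive square (Lorentzian signature).

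The endgame is as follows. Let $W\subseteq H^{1,1}(X;\mathbb{C})$ be the sum of the generalized eigenspaces of $A$ whose eigenvalues have modulus $>\sqrt{\lambda _2(f)}$. This set of eigenvalues is stable under complex conjugation, so $W=W_{\mathbb{R}}\otimes\mathbb{C}$ for a real $A$-invariant subspace $W_{\mathbb{R}}$. I would show that $W_{\mathbb{R}}$ is totally isotropic for $Q$. Granting this, a subspace on which a form with a single positive square vanishes identically has dimension at most $1$; since $\theta\in W_{\mathbb{R}}$ (its eigenvalue $\lambda _1(f)$ exceeds $\sqrt{\lambda _2(f)}$) and $\theta\neq0$, we get $W_{\mathbb{R}}=\mathbb{R}\theta$ and $W=\mathbb{C}\theta$. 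Hence $\lambda _1(f)$ is the only eigenvalue of modulus $>\sqrt{\lambda _2(f)}$, and its generalized eigenspace, being all of $W$, is one-dimensional, so $\lambda _1(f)$ is simple.

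Everything thus reduces to proving that $Q(u,v)=0$, i.e. $u\cup v\cup\{\omega _X^{k-2}\}=0$, whenever $u,v$ are (generalized) eigenclasses with eigenvalues $\mu,\nu$ satisfying $|\mu\nu|>\lambda _2(f)$. If $f$ were holomorphic this is immediate: $f^*$ is then a ring homomorphism, $u\cup v$ is a generalized eigenclass of $f^*|_{H^{2,2}(X)}$ with eigenvalue $\mu\nu$, and since $|\mu\nu|>\lambda _2(f)$ exceeds the growth rate of $(f^n)^*$ on $H^{2,2}(X)$ we must have $u\cup v=0$; in particular $\theta\cup\theta=0$ because $\lambda _1(f)^2>\lambda _2(f)$. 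The hard part will be that for a genuinely meromorphic (or rational) $f$ the pullback is \emph{not} multiplicative: a desingularization of the graph gives, for nef $\alpha,\beta$, only the inequality $(f^n)^*(\alpha\cup\beta)\le(f^n)^*\alpha\cup(f^n)^*\beta$, the defect being carried by the exceptional loci. Thus $u\cup v$ need not be an eigenclass and the spectral argument collapses. I expect the technical core to be the reinstatement of the vanishing via positivity: one controls the honest pullback $(f^n)^*(c)$ of a \emph{fixed} class $c\in H^{2,2}(X)$ by a bound of the form $\|(f^n)^*(c)\|\le C_{\varepsilon}(\lambda _2(f)+\varepsilon)^n$ coming from the definition of $\lambda _2(f)$ and submultiplicativity (cf. \cite{dinh-sibony1}), and plays this against the exact growth $(\mu\nu)^n$ of $A^nu\cup A^nv$ together with the inequality above; the gap $\lambda _1(f)^2>\lambda _2(f)$ is precisely what should forbid the non-multiplicativity defect from sustaining growth faster than $\lambda _2(f)^n$ along the directions paired by $Q$. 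Quantifying and absorbing that defect on the exceptional loci is, I expect, the main obstacle, the Perron--Frobenius step and the signature argument being routine once it is in hand.
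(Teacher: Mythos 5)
Your plan is essentially the paper's proof. The paper's engine is exactly the inequality you isolate as the ``technical core'': Proposition \ref{LemmaPullPushFormulaForMeromorphicMaps} states that $h^*(u).h^*(\overline u)-h^*(u.\overline u)$ is psef for any dominant meromorphic $h$, and it is proved by Hironaka's elimination of indeterminacies plus an induction over the single blowups in $\pi:Z\to X$, the defect at each codimension-$2$ blowup being the explicit nonnegative class $c_E|\{\alpha\}.\{L\}|^2[W]$ of Lemma \ref{LemmaPullPushFormulaForOneBlowup}~iv) (and zero for centers of codimension $\ge 3$). The paper then applies this to $h=f^n$, writes $u=(f^n)^*(v_n)$ with $v_n=A^{-n}u$ on the fast subspace (handling a putative Jordan block by an explicit $v_n$ with a linear-in-$n$ correction), and lets $\|(f^n)^*|_{H^{2,2}}\|\,\|v_n\|^2\to 0$ using $\tau^2>\lambda_2(f)$. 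So your identification of where the work lies, and of the mechanism (positivity of the defect carried by the exceptional loci), is exactly right; what remains for you is to actually prove the one-blowup formula and the induction, which is the content of Section 2.

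Two corrections to the surrounding frame. First, your endgame asks for too much: the positivity method does \emph{not} give $Q(u,v)=0$ on $W_{\mathbb R}$, only that $u.\overline u$ is psef for $u\in W$, hence $\mathcal H(u,u)\ge 0$; exact vanishing can fail (for $u=v=\theta$ nothing forces $\theta^2.\omega_X^{k-2}=0$, and the inequality only runs one way). If you insist on proving total isotropy you will get stuck. The repair is immediate and is what the paper does: a subspace of a form of signature $(1,h^{1,1}-1)$ on which the form is positive \emph{semidefinite} already has dimension at most $1$ (a null vector orthogonal to a timelike one is zero, and a $2$-dimensional totally isotropic subspace is impossible), so semipositivity on $W$ suffices. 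Second, a minor point on the Perron--Frobenius step: for a meromorphic $f$ the pullback $f^*=\pi_*g^*$ need not preserve the nef cone (pushforward by $\pi$ destroys nefness), so run the cone argument on the psef cone of $H^{1,1}(X,\mathbb R)$, which is closed, salient, full-dimensional and $f^*$-invariant; this step is needed (and left implicit in the paper) to know that $\lambda_1(f)$ itself, rather than $-\lambda_1(f)$, is an eigenvalue.
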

Theorem \ref{TheoremFirstDynamicalDegreeIsSimple} answers Question 3.3 in Guedj \cite{guedj3}. It was known when $f$ is holomorphic, see e.g. Cantat-Zeghib \cite{cantat-zeghib} where the case of holomorphic maps of $3$-folds is explicitly stated. An immediate consequence of Theorem \ref{TheoremFirstDynamicalDegreeIsSimple} is that if $f$ is $1$-stable and $\lambda _1(f)^2>\lambda _2(f)$, then the "degree growth" of $f$ satisfies $deg(f^n)=c\lambda _1(f)^n+O(\tau ^n)$ for some constants $c>0$ and $\tau <\lambda _1(f)$. In the case $X$ is a surface, the same estimate for the degree growth  was obtained in Boucksom-Favre-Jonsson \cite{boucksom-favre-jonsson} where the condition $f$ is $1$-stable is not needed. The conclusion of Theorem \ref{TheoremFirstDynamicalDegreeIsSimple} that $\lambda _1(f)$ is simple is very helpful in constructing Green currents and proving equi-distribution properties toward it (see e.g. Guedj \cite{guedj3}, Diller-Guedj \cite{diller-guedj} and Bayraktar \cite{bayraktar}).      

When $X$ is a compact K\"ahler surface, Diller-Favre \cite{diller-favre} proved a stronger conclusion than that of Theorem \ref{TheoremFirstDynamicalDegreeIsSimple} where the condition of $1$-stability is dropped. The following variant of Theorem \ref{TheoremFirstDynamicalDegreeIsSimple} gives a generalization of Diller and Favre's result to higher dimensions. Recall that $r_1(f)$ is the spectral radius of $f^*:H^{1,1}(X)\rightarrow H^{1,1}(X)$ and $r_2(f)$ is the spectral radius of $f^*:H^{2,2}(X)\rightarrow H^{2,2}(X)$.

\begin{theorem}
Let $X$ be a compact K\"ahler manifold, and let $f:X\rightarrow X$ be a dominant meromorphic map. Assume that $f^*:H^{2,2}(X)\rightarrow H^{2,2}(X)$ preserves the cone of psef classes. Then  

1) We have $r_1(f)^2\geq r_2(f)$.

2) Assume moreover that $r_1(f)^2>r_2(f)$. Then $r_1(f)$ is a simple eigenvalue of $f^*:H^{1,1}(X)\rightarrow H^{1,1}(X)$. Further, $r_1(f)$ is the only eigenvalue of modulus greater than $\sqrt{r_2(f)}$.  

\label{TheoremTheCaseNotStable}\end{theorem}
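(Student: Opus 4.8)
The plan is to play the cup product $H^{1,1}(X)\times H^{1,1}(X)\to H^{2,2}(X)$ against the pullback $f^*$, using the hypothesis to turn a statement about $H^{2,2}(X)$ into a spectral statement on $H^{1,1}(X)$. I write $\alpha\preceq\beta$ when $\beta-\alpha$ is psef. Two standard facts are used throughout: first, $f^*$ preserves the nef (hence psef) cone of $H^{1,1}(X)$; second, resolving $f$ by a smooth model $p,q:\Gamma\rightarrow X$ of its graph, so that $f^*=p_*q^*$, one has the super-multiplicative inequality $f^*(\alpha\cup\beta)\preceq f^*\alpha\cup f^*\beta$ for nef $\alpha,\beta$ (the defect is an effective class carried by the $p$-exceptional locus). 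By Perron--Frobenius applied to the salient nef cone, $r_1:=r_1(f)$ is attained on a nonzero nef eigenvector $\theta$, $f^*\theta=r_1\theta$; by hypothesis $f^*$ is monotone on the psef cone of $H^{2,2}(X)$, and $r_2:=r_2(f)$ is attained on a nonzero psef eigenvector there.

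For part 1) I would observe that $\omega_X^2$ is interior to the psef cone of $H^{2,2}(X)$, so every psef class is $\preceq$ a multiple of $\omega_X^2$; applying the monotone operator $(f^*)^n$ to the $r_2$-eigenvector and pairing with $\omega_X^{k-2}$ gives $r_2^n\lesssim\int_X(f^*)^n(\omega_X^2)\wedge\omega_X^{k-2}$. Iterating the super-multiplicative inequality yields $(f^*)^n(\omega_X^2)\preceq\bigl((f^*)^n\omega_X\bigr)\cup\bigl((f^*)^n\omega_X\bigr)$, and since $(f^*)^n\omega_X$ is nef with $\int_X(f^*)^n\omega_X\wedge\omega_X^{k-1}\lesssim r_1^n$ (Gelfand's formula, $r_1$ being the spectral radius on $H^{1,1}(X)$), the elementary bound $\int_X\gamma\cup\gamma\wedge\omega_X^{k-2}\leq C\bigl(\int_X\gamma\wedge\omega_X^{k-1}\bigr)^2$ for nef $\gamma$ gives $\int_X(f^*)^n(\omega_X^2)\wedge\omega_X^{k-2}\lesssim r_1^{2n}$. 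Taking $n$-th roots forces $r_2\leq r_1^2$.

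For part 2), assume $r_1^2>r_2$; the target is the decomposition $H^{1,1}(X,\mathbb{R})=\mathbb{R}\theta\oplus H_0$ into $f^*$-invariant pieces with all eigenvalues of $f^*|_{H_0}$ of modulus $\leq\sqrt{r_2}$, which gives simplicity and uniqueness at once. I would generalize the surface argument of Diller--Favre via a higher-dimensional Hodge index. Suppose first that $\theta$ is big. By the mixed Hodge--Riemann relations (for nef and big weights, obtained by regularization), the form $Q(\alpha,\beta)=\int_X\alpha\wedge\beta\wedge\theta^{k-2}$ has at most one positive direction, realized by $\theta$. The two further ingredients I would assemble are: (i) an $f^*$-invariant complement $H_0$ to $\mathbb{R}\theta$ on which $Q$ is negative definite --- produced by pairing against a suitable positive $(k-1,k-1)$-eigenclass of $f_*$; and (ii) the self-intersection upper bound $\bigl|\int_X(f^*)^n\beta\cup(f^*)^n\beta\wedge\theta^{k-2}\bigr|\lesssim r_2^n\|\beta\|^2$ for $\beta\in H_0$, obtained by writing $(f^*)^n\beta$ as a difference of nef classes and combining the super-multiplicative inequality with the monotonicity of $f^*$ on $H^{2,2}(X)$. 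Granting these, an eigenvector $\alpha\in H_0$ with $f^*\alpha=\mu\alpha$ satisfies, by mere linearity, $Q((f^*)^n\alpha,(f^*)^n\alpha)=\mu^{2n}Q(\alpha,\alpha)$ with $Q(\alpha,\alpha)<0$, while (ii) bounds the left side by $r_2^n$ in modulus; hence $|\mu|^2\leq r_2$. Thus the part of the spectrum of modulus $>\sqrt{r_2}$ reduces to the line $\mathbb{R}\theta$, so $r_1$ is simple and the unique eigenvalue of modulus $>\sqrt{r_2}$.

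The main obstacle is the upper bound (ii): the super-multiplicative inequality controls products of nef classes from only one side, whereas a general element of $H_0$ is a difference of nef classes, so extracting the rate $r_2$ (rather than the naive $r_1^2$) requires pairing the nef and anti-nef parts and using the psef-preservation on $H^{2,2}(X)$ to absorb the cross terms; this is precisely where the gap hypothesis $r_1^2>r_2$ is consumed, and getting the exponent and constant right is the crux. A companion difficulty is the construction in (i): the $Q$-orthogonal hyperplane is negative definite but need not be $f^*$-invariant, while an invariant transverse complement from a dual eigenclass need not be $Q$-negative, so reconciling invariance with negativity is itself a point to be settled. Finally there is the degenerate case in which $\theta$ fails to be big, so that $Q$ degenerates and the Hodge--Riemann signature is unavailable; there I would either perturb $\theta$ inside the nef cone and pass to the limit, or argue directly with the psef class $\theta\cup\theta$ and its orbit, reducing once more to the degree-$(2,2)$ growth estimate of part 1).
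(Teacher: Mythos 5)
Your part 1) is essentially the paper's own argument: the super-multiplicative inequality ``$f^*(u).f^*(\overline u)-f^*(u.\overline u)$ is psef'' is exactly Proposition \ref{LemmaPullPushFormulaForMeromorphicMaps}, it is iterated using the hypothesis that $f^*$ preserves the psef cone of $H^{2,2}(X)$, and comparing $\bigl((f^*)^n\omega_X\bigr)^2\geq (f^*)^n(\omega_X^2)$ against $\omega_X^{k-2}$ gives $r_1(f)^2\geq r_2(f)$. (One small remark: $(f^*)^n\omega_X$ need not be nef, only psef, but your upper bound only needs that the mass of a psef class controls its norm, so this is harmless.)

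Part 2) is where the proposal has genuine gaps. You choose the Diller--Favre route --- an $f^*$-invariant splitting $\mathbb{R}\theta\oplus H_0$ with the $\theta^{k-2}$-weighted intersection form negative definite on $H_0$ --- and you yourself list the three obstructions: producing a complement that is simultaneously invariant and $Q$-negative; proving the decay $|Q((f^*)^n\beta,(f^*)^n\beta)|\lesssim r_2^n$ for all $\beta\in H_0$; and the degenerate case where $\theta$ is not big (indeed $\theta$ need not even be nef, since the pushforward of a nef class along the resolved graph is only psef, so $\theta^{k-2}$ and the mixed Hodge--Riemann input are already in doubt). None of these is resolved in the proposal, and the decay estimate for an arbitrary $\beta\in H_0$ is not accessible from the one-sided super-multiplicative inequality alone. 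The paper sidesteps all three at once by a different contradiction scheme: if there were two non-collinear eigenvectors $u_1,u_2$ (or a Jordan block) with eigenvalues of modulus $\tau>\sqrt{r_2(f)}$, then for any $u=a_1u_1+a_2u_2$ one solves $(f^*)^n v_n=u$ explicitly inside the two-dimensional span, with $\|v_n\|\lesssim\tau^{-n}$; the iterated inequality gives $u.\overline u\geq (f^*)^n(v_n.\overline{v_n})$, and since $\|(f^*)^n|_{H^{2,2}(X)}\|\lesssim (r_2(f)+\epsilon)^n$ with $r_2(f)+\epsilon<\tau^2$ the right-hand side tends to $0$, so $u.\overline u$ is psef for every $u$ in a two-dimensional subspace. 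This makes the standard Hodge form $\mathcal{H}(u,u)=u.\overline u.w^{k-2}$ (with $w$ K\"ahler, so no bigness issue) nonnegative on a two-dimensional space, contradicting the signature $(1,h^{1,1}-1)$ of the Hodge index theorem. The decay is thus needed only on the span of the putative large eigenvectors, where $(f^*)^n$ can be inverted by hand --- precisely the step missing from your plan.
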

 
As a consequence, we obtain the following
\begin{corollary}
Let $X$ be a compact K\"ahler manifold of dimension $3$. Let $f:X\rightarrow X$ be a bimeromorphic map such that both $f$ and $f^{-1}$ are $1$-stable. Assume moreover that $\lambda _1(f)>1$. Then either $f$ or $f^{-1}$ satisfies the conclusions of Theorem \ref{TheoremFirstDynamicalDegreeIsSimple}.
\label{CorollaryBimeromorphicDimension3}\end{corollary}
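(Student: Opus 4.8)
The plan is to reduce the statement to an elementary inequality between the dynamical degrees and then invoke Theorem \ref{TheoremFirstDynamicalDegreeIsSimple}. Since both $f$ and $f^{-1}$ are assumed $1$-stable, Theorem \ref{TheoremFirstDynamicalDegreeIsSimple} applies to whichever of the two maps satisfies the gap hypothesis $\lambda _1^2>\lambda _2$. Hence it suffices to prove that at least one of the inequalities $\lambda _1(f)^2>\lambda _2(f)$ or $\lambda _1(f^{-1})^2>\lambda _2(f^{-1})$ holds.

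The key input is the duality of dynamical degrees under taking inverses, namely $\lambda _p(f)=\lambda _{k-p}(f^{-1})$ for a bimeromorphic map of a $k$-dimensional manifold (this is standard, see Dinh-Sibony \cite{dinh-sibony1}). For $k=3$ this gives $\lambda _1(f^{-1})=\lambda _2(f)$ and $\lambda _2(f^{-1})=\lambda _1(f)$, while $\lambda _0=\lambda _3=1$ since $f$ is bimeromorphic. Writing $a=\lambda _1(f)$ and $b=\lambda _2(f)$, the two candidate inequalities become $a^2>b$ and $b^2>a$, so I must show that at least one of these is strict.

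To finish, I would argue by contradiction. Log-concavity of the dynamical degrees gives $\lambda _1^2\geq \lambda _0\lambda _2$ and $\lambda _2^2\geq \lambda _1\lambda _3$, i.e. $a^2\geq b$ and $b^2\geq a$. If neither candidate inequality were strict, then $a^2\leq b$ and $b^2\leq a$, forcing $a^2=b$ and $b^2=a$. Substituting one into the other yields $a^4=a$, hence $a^3=1$ and $a=1$, contradicting the hypothesis $\lambda _1(f)=a>1$. Therefore $a^2>b$ or $b^2>a$; applying Theorem \ref{TheoremFirstDynamicalDegreeIsSimple} to $f$ in the first case and to $f^{-1}$ in the second completes the proof.

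I do not expect a serious obstacle here: the argument is a short combination of the known duality $\lambda _p(f)=\lambda _{k-p}(f^{-1})$ with log-concavity. The only points requiring care are to confirm that this duality is available in the meromorphic (not merely algebraic) setting for the bimeromorphic $f$ at hand, and to check that $1$-stability of $f^{-1}$ — rather than just of $f$ — is genuinely used, since it is exactly what allows Theorem \ref{TheoremFirstDynamicalDegreeIsSimple} to be applied to $f^{-1}$ in the case $b^2>a$.
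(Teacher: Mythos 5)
Your proposal is correct and follows essentially the same route as the paper: the duality $\lambda_1(f^{-1})=\lambda_2(f)$, $\lambda_2(f^{-1})=\lambda_1(f)$ in dimension $3$, followed by the observation that $\lambda_1(f)>1$ forces at least one of the two gap inequalities, so that Theorem \ref{TheoremFirstDynamicalDegreeIsSimple} applies to $f$ or to $f^{-1}$. Your contradiction argument (if $a^2\leq b$ and $b^2\leq a$ then $a^4\leq b^2\leq a$, so $a\leq 1$) merely spells out the elementary step the paper leaves implicit.
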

\begin{proof}
Observe that $\lambda _1(f^{-1})=\lambda _2(f)$ and $\lambda _2(f^{-1})=\lambda _1(f)$. Hence when $\lambda _1(f)>1$, at least one of the following conditions hold: $\lambda _1(f)^2>\lambda _2(f)$ and $\lambda _1(f^{-1})^2>\lambda _2(f^{-1})$.
\end{proof}
Corollary \ref{CorollaryBimeromorphicDimension3} can be applied to pseudo-automorphisms $f:X\rightarrow X$ of a $3$-fold $X$ with $\lambda _1(f)>1$.  By definition (see e.g. \cite{dolgachev-ortland}), a bimeromorphic map $f:X\rightarrow X$ is pseudo-automorphic if there are subvarieties $V,W$ of codimension at least $2$ so that $f:X-V\rightarrow X-W$ is biholomorphic. If $X$ has dimension $3$, then any pseudo-automorphism $f:X\rightarrow X$ is both $1$-stable and $2$-stable (see Bedford-Kim \cite{bedford-kim}). The first examples of pseudo-automorphisms with first dynamical degree larger than $1$ on blowups of $\mathbb{P}^3$ were given in \cite{bedford-kim}, by studying linear fractional maps in dimension $3$. There are now several other examples in any dimension (see e.g.  Perroni-Zhang  \cite{perroni-zhang}, Blanc \cite{blanc} and Oguiso \cite{oguiso2}). 

The key tools in the proofs of Theorems \ref{TheoremFirstDynamicalDegreeIsSimple} and \ref{TheoremTheCaseNotStable} are the Hodge index theorem (Hodge-Riemann bilinear relations), Hironaka's elimination of indeterminacies for meromorphic maps, and a pull-push formula for blowups along smooth centers. Section 2 is devoted to the proofs of Theorems \ref{TheoremFirstDynamicalDegreeIsSimple} and \ref{TheoremTheCaseNotStable}.

All of the above results have analogs in the algebraic setting, where $X$ is a projective manifold over an algebraic closed field of characteristic zero, and $f:X\rightarrow X$ is a rational map. This will be done in Section 3 (see Theorems \ref{TheoremFirstDynamicalDegreeIsSimpleAlgebraicCase} and \ref{TheoremTheCaseNotStableAlgebraicCase}). We conclude this introduction noting some remarks. Unlike the case of compact K\"ahler manifolds, a priori there are no smooth forms, groups $H^{p,p}(X)$ and "regularization of currents" available in the algebraic case. In stead, we use the groups of algebraic cycles modulo numerical equivalence and Chow's moving lemma to define dynamical degrees. The analog of the Hodge index theorem is then the Grothendieck-Hodge index theorem. We stress that here the dynamical degrees can be defined for rational maps over any algebraic closed field, not necessarily of characteristic zero. 

{\bf Remark.} After this paper was written, the author was informed by Charles Favre of another algebraic method to define dynamical degrees.  

{\bf Acknowledgements.} We are benefited from many discussions and correspondences with Tien-Cuong Dinh, Charles Favre, Mattias Jonsson, J\'anos Koll\'ar, Pierre Milman, Viet-Anh Nguyen and Claire Voisin on various topics: Riemann-Zariski space, dynamical degrees, mixed Hodge-Riemann theorem, Grothendieck-Hodge index theorem, Hironaka's resolution of singularities  and Hironaka's elimination of indeterminacies. The author is grateful to Mattias Jonsson whose suggestion of extending Theorems \ref{TheoremFirstDynamicalDegreeIsSimple} and \ref{TheoremTheCaseNotStable} to the algebraic setting and whose help with an earlier version of the paper made the results and the presentation of the paper better. The author also would like to thank Dan Coman, Eric Bedford, Keiji Oguiso, H\'elene Esnault, and Turgay Bayraktar for their help and useful comments.   

\section{Proofs of Theorems \ref{TheoremFirstDynamicalDegreeIsSimple} and \ref{TheoremTheCaseNotStable}}
Let $X$ and $Y$ be compact K\"ahler manifolds and let $h:X\rightarrow Y$ be a dominant meromorphic map. By Hironaka's elimination of indeterminacies (see e.g. Corollary 1.76 in Koll\'ar \cite{kollar} and Theorem 7.21 in Harris \cite{harris} for the case $X$ is projective, and see Hironaka \cite{hironaka} and Moishezon \cite{moishezon} for the general case), there is a compact K\"ahler manifold $Z$, a map $\pi :Z\rightarrow X$ which is a finite sequence of blowups along smooth centers, and a surjective holomorphic map $g:Z\rightarrow Y$, so that $h=g\circ \pi ^{-1}$. (Since the analytic case of Hironaka's elimination of indeterminacies is less known, we give here a sketch of how to prove it, cf. the paper Ishii-Milman \cite{ishii-milman} for related ideas. We thank Pierre Milman for his generous help with this. Consider $\Gamma $ a resolution of singularities of the graph $\Gamma _h$, and let $p,\gamma :\Gamma \rightarrow X,Y$ be the induced holomorphic maps. In particular $p:\Gamma\rightarrow X$ is a modification. By global Hironaka's flattening theorem, we can find a finite sequence of blowups $\pi :X'\rightarrow X$ along smooth centers, and let $\pi _{\Gamma}:\Gamma '\rightarrow \Gamma $ be the corresponding blowup along the ideals which are pullbacks by $p$ of the ideals of the centers of the blowup $\pi$, so that the induced map $p':\Gamma '\rightarrow X'$ is still holomorphic, bimeromorphic and flat. A priori, $\Gamma '$ may be singular. But a holomorphic, bimeromorphic and flat map must actually be a biholomorphic map. Therefore, $\Gamma '$ is also smooth, $p'$ is biholomorphic, and the holomorphic maps $\pi :Z=X'\rightarrow X$ and $g=\gamma \circ \pi _{\Gamma}\circ p'^{-1} :~ Z=X'\rightarrow Y$ are what needed.)  

For our purpose here, it is important to study the blowups whose center is a smooth submanifold of codimension exactly $2$. We consider first the case of a single blowup. We use the conventions that if $W$ is a subvariety then $[W]$ denotes the current of integration along $W$, and if $T$ is a closed current then $\{T\}$ denotes its cohomology class (for the case $T=[W]$ where $W$ is a subvariety, we write $\{W\}$ instead of $\{[W]\}$ for convenience). For two cohomology classes $u$ and $v$, we denote by $u.v$ the cup product.

We have the following pull-push formulas for a single blowup (a more precise version of this for birational surface maps was given in \cite{diller-favre})
\begin{lemma} Let $X$ be a compact K\"ahler manifold of dimension $k$. Let $\pi :Z\rightarrow X$ be a blowup of $X$ along a smooth  submanifold $W=\pi (E)$ of codimension exactly $2$. Let $E$ be the exceptional divisor and let $L$ be a general fiber of $\pi$.  

i) There is a constant $c_{E}\geq 0$ so that
\begin{eqnarray*}
(\pi )_*(\{E\}.\{E\})=-c_{E}\{W\}.
\end{eqnarray*}

ii) If $\alpha$ is a closed smooth $(1,1)$ form with complex coefficients on $Z$ then 
\begin{eqnarray*}
\pi ^*(\pi )_*(\alpha )=\alpha +(\{\alpha \}.\{L\})[E].
\end{eqnarray*}

iii) If $\alpha$ is a closed smooth $(1,1)$ form with complex coefficients on $Z$ then
\begin{eqnarray*}
(\pi )_*(\alpha \wedge [E])=c_{E}(\{\alpha \}.\{L\})[W].
\end{eqnarray*}

iv) If $\alpha$ is a closed smooth $(1,1)$ form with complex coefficients on $Z$ then
\begin{eqnarray*}
(\pi )_*((\pi )^*(\pi )_*(\alpha )\wedge \overline{\alpha })-(\pi )_*(\alpha \wedge \overline{\alpha })=c_{E}|\{\alpha \}.\{L\}|^2[W].
\end{eqnarray*}
\label{LemmaPullPushFormulaForOneBlowup}\end{lemma}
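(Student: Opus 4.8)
The plan is to reduce every assertion to the geometry of the exceptional divisor. Because $W$ has codimension exactly $2$, $E$ is the projectivization $\mathbb{P}(N_{W/X})$ of the rank-$2$ normal bundle, so $p:=\pi|_E:E\rightarrow W$ is a $\mathbb{P}^1$-bundle whose fibres are the lines $L$. Writing $i:W\hookrightarrow X$, $j:E\hookrightarrow Z$ for the inclusions and $\zeta=c_1(\mathcal{O}_{\mathbb{P}(N_{W/X})}(1))$, I would first record the structural facts used throughout: the adjunction identity $j^*\{E\}=-\zeta$ (from $N_{E/Z}=\mathcal{O}_E(-1)$); the projection identities $\pi_*\pi^*=\mathrm{id}$ and $\pi_*\{E\}=0$ (since $E$ maps onto the codimension-$2$ set $W$); the value $\{E\}.\{L\}=-1$; and the fibre-integration relations $p_*\zeta=1$ and $p_*(j^*\alpha)=\{\alpha\}.\{L\}$ for the $\mathbb{P}^1$-bundle $p$. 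Parts i) and iii) will then be direct Gysin computations, part ii) will be proved first in cohomology and then upgraded to an identity of currents, and part iv) will follow by substituting ii) into the left-hand side and invoking iii).

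For part i), I would write $\{E\}.\{E\}=\{E\}.j_*(1)=j_*(j^*\{E\})=-j_*\zeta$, so that $\pi_*(\{E\}.\{E\})=(\pi\circ j)_*(-\zeta)=i_*p_*(-\zeta)=-(p_*\zeta)\{W\}$; setting $c_E:=p_*\zeta$ gives a constant with $c_E\geq 0$ (in fact $c_E=1$, as $\zeta$ restricts to the positive generator on each fibre). For part iii) I would use that, for smooth $\alpha$, one has $\alpha\wedge[E]=j_*(j^*\alpha)$, hence $\pi_*(\alpha\wedge[E])=i_*p_*(j^*\alpha)$; since $\alpha$ is closed, $p_*(j^*\alpha)$ is the constant $w\mapsto\int_{L_w}\alpha=\{\alpha\}.\{L\}$ (all fibres being homologous), and therefore $\pi_*(\alpha\wedge[E])=(\{\alpha\}.\{L\})[W]=c_E(\{\alpha\}.\{L\})[W]$, the proportionality constant being the very same $c_E$ of part i).

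The crux is part ii). In cohomology the identity is routine: using $H^{1,1}(Z)=\pi^*H^{1,1}(X)\oplus\mathbb{C}\{E\}$, write $\{\alpha\}=\pi^*v+t\{E\}$; applying $\pi_*$ gives $\pi_*\{\alpha\}=v$, and pairing with $\{L\}$ — using $\pi^*v.\{L\}=v.\pi_*\{L\}=0$ (the curve $L$ is contracted) and $\{E\}.\{L\}=-1$ — yields $t=-\{\alpha\}.\{L\}$, so that $\pi^*\pi_*\{\alpha\}=\{\alpha\}+(\{\alpha\}.\{L\})\{E\}$. The step I expect to be the main obstacle is promoting this to the stated identity of currents. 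Here I would set $S:=\pi^*\pi_*\alpha-\alpha$ and argue that $S$ is a closed $(1,1)$-current which vanishes on $Z\setminus E$ (where $\pi$ is biholomorphic and hence $\pi^*\pi_*\alpha=\alpha$), so that $S$ is supported on the smooth connected hypersurface $E$. Two points need care: first, to define $\pi^*\pi_*\alpha$ at all, since $\pi_*\alpha$ is only a closed $(1,1)$-current singular along $W$; choosing a K\"ahler form $\omega_Z$ on $Z$ and $C>0$ with $\alpha+C\omega_Z\geq 0$, the currents $\pi_*(\alpha+C\omega_Z)$ and $\pi_*(C\omega_Z)$ are closed positive with quasi-plurisubharmonic potentials, so their analytic pullbacks are defined and $\pi^*\pi_*\alpha$ is a well-defined closed current of order zero; second, to invoke the support theorem, by which a closed $(1,1)$-current of order zero supported on the smooth connected divisor $E$ is a constant multiple $c[E]$. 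Granting these, comparing cohomology classes with the identity just proved forces $c=\{\alpha\}.\{L\}$ because $\{E\}\neq 0$.

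Finally, part iv) is formal: wedging the current identity $\pi^*\pi_*\alpha=\alpha+(\{\alpha\}.\{L\})[E]$ of part ii) with the smooth form $\overline{\alpha}$ and pushing forward gives $\pi_*(\pi^*\pi_*\alpha\wedge\overline{\alpha})-\pi_*(\alpha\wedge\overline{\alpha})=(\{\alpha\}.\{L\})\,\pi_*([E]\wedge\overline{\alpha})$; applying part iii) to $\overline{\alpha}$ (equivalently, conjugating iii) and using that $[E]$, $[W]$ and $c_E$ are real) gives $\pi_*([E]\wedge\overline{\alpha})=c_E\,\overline{\{\alpha\}.\{L\}}\,[W]$, and the product collapses to $c_E|\{\alpha\}.\{L\}|^2[W]$. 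Thus the whole lemma rests on the single analytic input in part ii) — the definition of $\pi^*\pi_*\alpha$ as a current together with the support theorem identifying a closed order-zero $(1,1)$-current on $E$ with a multiple of $[E]$ — while the remaining parts are bookkeeping with the Gysin maps $i_*$, $j_*$ and the projection formula.
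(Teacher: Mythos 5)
Your proof is correct, and for parts i) and iii) it takes a genuinely different route from the paper's. The paper proves i) by Demailly regularization of $[E]$, the support theorem for normal currents (to see that $\pi_*(\{E\}.\{E\})$ is a multiple of $\{W\}$), and then a separate positivity argument (pairing with $\pi^*(\omega_X^{k-2})$ and using $\{E\}.\{L\}=-1$) which only yields $c_E\geq 0$; iii) is likewise obtained there from the support theorem plus cohomological bookkeeping in the decomposition $H^{1,1}(Z)=\pi^*H^{1,1}(X)\oplus\mathbb{C}\{E\}$, with the constant pinned down through i). You instead compute directly with Gysin maps and fiber integration over the $\mathbb{P}^1$-bundle $E=\mathbb{P}(N_{W/X})$, which gives the sharper conclusion $c_E=p_*\zeta=1$ for an arbitrary compact K\"ahler $X$ (the paper records $c_E=1$ only as a remark in the projective case) and reduces iii) to the observation that $p_*(j^*\alpha)$ is a locally constant function equal to $\{\alpha\}.\{L\}$ --- note this, like the paper's phrasing ``a general fiber $L$'', tacitly uses connectedness of $W$. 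For ii), which the paper dismisses as standard, you correctly isolate the genuine analytic content: making sense of $\pi^*\pi_*\alpha$ by splitting $\alpha$ into positive closed pieces and pulling back via quasi-plurisubharmonic potentials, then applying the support theorem to the closed order-zero current $\pi^*\pi_*\alpha-\alpha$ supported on $E$ and matching cohomology classes. Part iv) is identical in both treatments. The trade-off is that the paper's regularization argument for i) makes no use of the projective-bundle structure of $E$ and transfers verbatim to centers of codimension at least $3$ (as in its subsequent remark), whereas your computation buys the exact value of $c_E$ and a self-contained proof of the paper's Remark 1.
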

Remarks: 

1) If $X$ is projective, then $c_E=1$ in the lemma (see Lemma \ref{LemmaPullPushFormulaForOneBlowupAlgebraicCase}). We thank Charles Favre for showing this to us. 

2) Lemma \ref{LemmaPullPushFormulaForOneBlowup} i), iii), iv) and v) are trivially true when the center of blowup $W=\pi _1(E)$ has codimension at least $3$. For example, then in i) we have $\pi _*(\{E\}.\{E\})=0$. In fact, by the same argument as in the proof of i) below, the cohomology class $\pi _*(\{E\}.\{E\})$ can be represented by a difference of two positive closed $(2,2)$ currents supported in $W=\pi (E)$. Since $W$ has codimension at least $3$, it follows that $\pi _*(\{E\}.\{E\})=0$.  

\begin{proof}

i) By Demailly's regularization for positive closed $(1,1)$ currents (see Demailly \cite{demailly}, and also Dinh-Sibony \cite{dinh-sibony1}), there are positive closed smooth $(1,1)$ forms $\alpha _n,\beta _n$ of bounded masses so that $\alpha _n-\beta _n$ weakly converges to the current of integration $[E]$. Let $\alpha$ and $\beta$ be any cluster points of the currents $\alpha _n\wedge [E]$ and $\beta _n\wedge [E]$, then $\alpha$ and $\beta$ are positive closed $(2,2)$ currents with support in $E$ and in cohomology $\{\alpha -\beta \}=\{E\}.\{E\}$. Therefore $\pi _*(\{E\}.\{E\})$ can be represented by the difference $\pi _*(\alpha )-\pi _*(\beta )$ of two positive closed $(2,2)$ currents $\pi _*(\alpha )$ and $\pi _*(\beta )$. Each of the latter has support in $W=\pi (E)$,  hence since $W$ has codimension exactly $2$, each of them must be a multiple of the current of integration $[W]$ by the support theorem for normal currents. We infer
\begin{eqnarray*}
\pi _*(\{E\}.\{E\})=-c_{E}\{W\},
\end{eqnarray*} 
for a constant $c_{E}$. It remains to show that $c_{E}\geq 0$. To this end, we let $\omega _X$ be a K\"ahler form on $X$. Then we get
\begin{eqnarray*}
\{E\}.\{E\}.\{\pi ^*(\omega _X^{k-2})\}=(\pi )_*(\{E\}.\{E\}).\{\omega _X^{k-2}\}=-c_{E}\{W\}.\{\omega _X^{k-2}\}.
\end{eqnarray*}
Since $\{W\}.\{\omega _X^{k-2}\}=\{[W]\wedge \omega _X^{k-2}\}$ is a positive number (equal the mass of $W$), to show that $c_{E}\geq 0$ it suffices to show that $\{E\}.\{E\}.\{\pi ^*(\omega _X^{k-2})\}\leq 0$. If we can show that $\{E\}.\{\pi ^*(\omega _X^{k-2})\}=a\{L\}$ for some constant $a\geq 0$ then $\{E\}.\{E\}.\{\pi ^*(\omega _X^{k-2})\}=a\{E\}.\{L\}=-a\leq 0$ as wanted. To this end, first we observe that $\{E\}.\{\pi ^*(\omega _X^{k-2})\}=a\{L\}$ for some constant $a$, because $H^{k-1,k-1}(Z)$ is generated by $\pi _1^*H^{k-1,k-1}(X)$ and $\{L\}$, and by the projection formula $(\pi )_*(\{E\}.\{\pi ^*(\omega _X^{k-2})\})=(\pi )_*(\{E\}).\{\omega _X^{k-2})\}=0$. The constant $a$ then must be non-negative because $\{E\}.\{\pi ^*(\omega _X^{k-2})\}=\{[E]\wedge \pi ^*(\omega _X^{k-2})\}$ is a psef class. 

ii) This is a standard result using $\{E\}.\{L\}=-1$ (see also iii) below).

iii) Since $(\pi )_*(\alpha \wedge [E])$ is a normal $(2,2)$ current with support in $W=\pi (E)$ which is a subvariety of codimension $2$ in $X$, by support theorem it follows that there is a constant $c$ such that $(\pi )_*(\alpha \wedge [E])=c[W]$. It is clear that $c$ depends only on the cohomology class of $(\pi )_*(\alpha \wedge [E])$. Since $H^{1,1}(Z)$ is generated by $\pi ^*(H^{1,1}(X))$ and $\{E\}$, we can write $\{\alpha \}=a\pi ^*(\beta )+b\{E\}$ where $\beta \in H^{1,1}(X)$. Then using i) and the projection formula we obtain
\begin{eqnarray*}
(\pi )_*\{\alpha \wedge [E]\}&=&(\pi )_*(\{\alpha \}.\{E\})=b(\pi )_*(\{E\}.\{E\})\\
&=&-bc_{E}\{\pi (E)\}.  
\end{eqnarray*}
Therefore $c=-bc_{E}$. The constant $-b$ can be computed as follows
\begin{eqnarray*}
\{\alpha \}.\{L\}=(a\pi ^*(\beta )+b\{E\}).\{L\}=b\{E\}.\{L\}=-b.
\end{eqnarray*}
Hence $c=(\{\alpha\}.\{L\})c_{E}$ as claimed.

iv) We have
\begin{eqnarray*}
(\pi )_*(\pi ^*(\pi )_*(\alpha )\wedge \overline{\alpha})&=&(\pi )_*((\alpha +(\{\alpha \}.\{L\})[E])\wedge \overline{\alpha})\\
&=&(\pi )_*(\alpha \wedge \overline{\alpha} )+(\{\alpha \}.\{L\})(\pi )_*([E]\wedge \overline{\alpha})\\
&=&(\pi )_*(\alpha \wedge \overline{\alpha} )+c_{E}|\{\alpha \}.\{L\}|^2[\pi (E)].
\end{eqnarray*}
Thus iv) is proved.
\end{proof}

In particular, Lemma \ref{LemmaPullPushFormulaForOneBlowup} shows that for a single blowup $\pi :Z\rightarrow X$, if $\alpha$ is a closed smooth $(1,1)$ form with complex coefficients then $(\pi )_*((\pi )^*(\pi )_*(\alpha )\wedge \overline{\alpha })-(\pi )_*(\alpha \wedge \overline{\alpha })$ is a positive closed $(2,2)$ current. (If the center of blowup $W$ has codimension exactly $2$ then this follows from Lemma \ref{LemmaPullPushFormulaForOneBlowup} iv), while if $W$ has codimension at least $3$ then $(\pi )_*((\pi )^*(\pi )_*(\alpha )\wedge \overline{\alpha })-(\pi )_*(\alpha \wedge \overline{\alpha })=0$ as observed in the remarks after the statement of Lemma \ref{LemmaPullPushFormulaForOneBlowup}.) It follows that if $u\in H^{1,1}(Z)$ is a cohomology class with complex coefficients, then     
$\pi _*(u).\pi _*(\overline{u})-\pi _*(u.\overline{u})$ is a psef class, that is can be represented by a positive closed $(2,2)$ current. In fact, let $\alpha$ be a closed smooth $(1,1)$ form representing $u$. Then, $(\pi )_*(u.\overline{u})$ is represented by $(\pi )_*(\alpha \wedge \overline{\alpha})$, and by the projection formula $(\pi )_*(u).(\pi )_*(\overline{u})$ is represented by $(\pi )_*(\pi ^*(\pi )_*(\alpha )\wedge \overline{\alpha})$. Hence from iv), we infer that $\pi _*(u).\pi _*(\overline{u})-\pi _*(u.\overline{u})$ is psef, as claimed. We now give a generalization of this to the case of a finite blowup and to meromorphic maps. 

\begin{proposition}

1) Let $X$ be a compact K\"ahler manifold, and $\pi :Z\rightarrow X$ a finite composition of blowups along smooth centers. Further, let $u\in H^{1,1}(Z)$ be a $(1,1)$ cohomology class with complex coefficients. Then $(\pi )_*(u).(\pi )_*( \overline{u })-(\pi )_*(u . \overline{u})$ is a psef class.

2) Let $X$ and $Y$ be compact K\"ahler manifolds, and $h:X\rightarrow Y$ a dominant meromorphic map. Further, let $u\in H^{1,1}(Y)$ be a cohomology class with complex coefficients on $Y$. Then $h^*(u).h^*(\overline{u} )-h^*(u . \overline{u})$ is a psef class in $H^{2,2}(X)$.
\label{LemmaPullPushFormulaForMeromorphicMaps}\end{proposition}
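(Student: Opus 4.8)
The plan is to prove part 1) by induction on the number of blowups, using the single-blowup case already established in the paragraph following Lemma \ref{LemmaPullPushFormulaForOneBlowup}, and then to deduce part 2) from part 1) via Hironaka's elimination of indeterminacies. Throughout I will use that pushforward by a proper holomorphic map sends positive closed currents to positive closed currents, so $\pi_*$ preserves the psef cone, and that pushforward commutes with complex conjugation.

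For part 1), write $\pi=\pi'\circ\beta$, where $\beta:Z\rightarrow Z'$ is the last blowup in the sequence building $Z$ and $\pi':Z'\rightarrow X$ is a composition of one fewer blowup, so that $(\pi)_*=(\pi')_*\circ\beta_*$. Set $v=\beta_*(u)\in H^{1,1}(Z')$; then $(\pi)_*(u)=(\pi')_*(v)$, $(\pi)_*(\overline{u})=(\pi')_*(\overline{v})$, and $(\pi)_*(u.\overline{u})=(\pi')_*(\beta_*(u.\overline{u}))$. Inserting and removing the term $(\pi')_*(v.\overline{v})$ gives the splitting
\begin{eqnarray*}
(\pi)_*(u).(\pi)_*(\overline{u})-(\pi)_*(u.\overline{u})&=&[(\pi')_*(v).(\pi')_*(\overline{v})-(\pi')_*(v.\overline{v})]\\
&&+(\pi')_*[\beta_*(u).\beta_*(\overline{u})-\beta_*(u.\overline{u})].
\end{eqnarray*}
The first bracket is psef by the inductive hypothesis applied to $\pi'$ and the class $v$. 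The term inside $(\pi')_*$ is psef by the single-blowup case (the remark after Lemma \ref{LemmaPullPushFormulaForOneBlowup}), and since $(\pi')_*$ preserves psef classes its image is again psef. A sum of two psef classes is psef, which closes the induction; the base case is the single blowup, and the case $\pi=\mathrm{id}$ where the expression is the zero class.

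For part 2), apply Hironaka's elimination of indeterminacies to $h$ to obtain a compact K\"ahler manifold $Z$, a finite composition of blowups along smooth centers $\pi:Z\rightarrow X$, and a surjective holomorphic map $g:Z\rightarrow Y$ with $h=g\circ\pi^{-1}$, so that $h^*=(\pi)_*\circ g^*$. Put $w=g^*(u)\in H^{1,1}(Z)$. Since $g$ is holomorphic, $g^*$ is a ring homomorphism commuting with conjugation, hence $g^*(\overline{u})=\overline{w}$ and $g^*(u.\overline{u})=w.\overline{w}$. Therefore
\begin{eqnarray*}
h^*(u).h^*(\overline{u})-h^*(u.\overline{u})&=&(\pi)_*(w).(\pi)_*(\overline{w})-(\pi)_*(w.\overline{w}),
\end{eqnarray*}
which is psef by part 1) applied to the class $w$.

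The only substantive content is contained in the single-blowup computation of Lemma \ref{LemmaPullPushFormulaForOneBlowup} iv) and the remark that follows it; the Proposition is then a formal consequence. The points that require care, though routine, are verifying that $(\pi')_*$ maps psef classes to psef classes (positivity and closedness are preserved under proper pushforward) and that the multiplicativity $g^*(u.\overline{u})=g^*(u).g^*(\overline{u})$ holds because $g$ is genuinely holomorphic. One should also confirm at the outset that $h^*=(\pi)_*\circ g^*$ is the intended definition of the meromorphic pullback, so that the identity used in part 2) is legitimate.
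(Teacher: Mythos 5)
Your proof is correct and follows essentially the same route as the paper: part 1) by induction on the number of blowups, peeling off the last single blowup and combining the inductive hypothesis with the single-blowup case and the fact that holomorphic pushforward preserves psef classes (your exact telescoping identity is just the paper's two psef inequalities added together), and part 2) by Hironaka's elimination of indeterminacies exactly as in the paper. No gaps.
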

\begin{proof}

1) We prove by induction on the number of single blowups performed. If $\pi$ is a single blowup then this follows from the above observation. Now assume that 1) is true when the number of single blowups performed is $\leq n$. We prove that 1) is true also when then number of single blowups performed is $\leq n+1$. We can decompose $\pi =\pi _1\circ \pi _2:Z\rightarrow Y\rightarrow X$, where $\pi _2:Z\rightarrow Y$ is a single blowup, and $\pi _1:Y\rightarrow X$ is a composition of $n$ single blowups. Apply the inductional assumption to $\pi _1$ and the cohomology class $(\pi _2)_*(u)$, we get
\begin{eqnarray*}
\pi _*(u).\pi _*(\overline{u})=(\pi _1)_*((\pi _2)_*(u)).(\pi _1)_*((\pi _2)_*(\overline{u}))\geq (\pi _1)_*((\pi _2)_*(u).(\pi _2)_*(\overline{u})).
\end{eqnarray*} 
Here the $\geq $ means that the difference of the two currents is psef. Now using the result for the single blowup $\pi _2$ and the fact that push-forward by the holomorphic map $\pi _1$ preserves psef classes, we have 
\begin{eqnarray*}
(\pi _1)_*((\pi _2)_*(u).(\pi _2)_*(\overline{u}))\geq (\pi _1)_*(\pi _2)_*(u.\overline{u})=\pi _*(u.\overline{u}).
\end{eqnarray*}
Hence $\pi _*(u).\pi _*(\overline{u})\geq \pi _*(u.\overline{u})$ as wanted.

2) By Hironaka's elimination of indeterminacies (see Hironaka \cite{hironaka} and Moishezon \cite{moishezon}), we can find a compact K\"ahler manifold $Z$, a finite blowup along smooth centers $\pi :Z\rightarrow X$ and a surjective holomorphic map $g:Z\rightarrow Y$ so that $h=g\circ \pi ^{-1}$. By definition $h^*(u)=\pi _*g^*(u)$ and   $h^*(u.\overline{u})$ $=$ $\pi _*(g^*(u.\overline{u}))$ $=$ $\pi _*(g^*(u).g^*(\overline{u}))$ (to see these equalities, we choose a smooth closed $(1,1)$ form $\alpha$ representing $u$ and see immediately the equalities on the level of currents). Therefore, apply 1) to the blowup $\pi :Z\rightarrow X$ and to the $(1,1)$ cohomology class $g^*(u)$ on $Z$, we obtain
\begin{eqnarray*}
h^*(u). h^*(\overline{u})-h^*(u .\overline{u})=\pi _*(g^*(u)). \pi _*(\overline{g^*(u )})-\pi _*(g^*(u ). \overline{g^*(u )})\geq 0.
\end{eqnarray*}
\end{proof}

For the proofs of Theorems \ref{TheoremFirstDynamicalDegreeIsSimple} and \ref{TheoremTheCaseNotStable} we need to use the famous Hodge index theorem (Hodge-Riemann bilinear relations, see e.g. the last part of Chapter 0 in Griffiths-Harris \cite{griffiths-harris}). Let $X$ be a compact K\"ahler manifold of dimension $k$. Let $w\in H^{1,1}(X)$ be the cohomology class of a K\"ahler form on $X$. We define a Hermitian quadratic form which for cohomology classes with complex coefficients $u,v\in H^{1,1}(X)$ takes the value
\begin{eqnarray*}
\mathcal{H}(u,v)=u.\overline{v}.w^{k-2}.
\end{eqnarray*}
Hodge index theorem says that the signature of $\mathcal{H}$ is $(1,h^{1,1}-1)$ where $h^{1,1}$ is the dimension of $H^{1,1}(X)$.

We are now ready for the proofs of Theorems \ref{TheoremFirstDynamicalDegreeIsSimple} and \ref{TheoremTheCaseNotStable}.

\begin{proof}[Proof of Theorem \ref{TheoremFirstDynamicalDegreeIsSimple}] 

First we show that there cannot be two non-collinear vectors $u_1,u_2\in H^{1,1}(X)$ for which $f^*u_1=\tau _1u_1$ and $f^*u_2=\tau _2u_2$, where $\tau =\min \{|\tau _1|, |\tau _2|\}>\sqrt{\lambda _2(f)}$. Assume otherwise, we will show that for any $u$ in the complex vector space of dimension $2$ generated by $u_1$ and $u_2$, then $\mathcal{H}(u,u)\geq 0$ and this gives a contradiction to the Hodge index theorem. To this end, it suffices to show that $u.\overline{u}$ is psef. Let $u=a_1u_1+a_2u_2$. For $n\in\mathbb{N}$, we define $$v_n=\frac{a_1}{\tau _1^n}u_1+\frac{a_2}{\tau _2^n}u_2.$$ Then it is easy to check that $(f^*)^n(v_n)=u$.  Because $f$ is $1$-stable, we have from Proposition \ref{LemmaPullPushFormulaForMeromorphicMaps} that
\begin{eqnarray*}
u.\overline{u}= (f^*)^n(v_n).(f^*)^n(\overline{v_n})=(f^n)^*(v_n).(f^n)^*(\overline{v_n})\geq (f^n)^*(v_n.\overline{v_n}),
\end{eqnarray*}
for any $n\in \mathbb{N}$. (Here the inequality $\geq$ means that the difference of the two cohomology classes is psef.) We fix an arbitrary norm $||\cdot ||$ on the vector space $H^{1,1}(X)$. Then $||v_n||$ is bounded by $1/\tau ^n$, hence the assumption that $\tau >\sqrt{\lambda _2(f)}$ implies that $(f^n)^*(v_n.\overline{v_n})$ converges to $0$. Therefore, $u.\overline{u}\geq 0$ as wanted. 

Hence $\lambda _1(f)$ is the unique eigenvalue of modulus $>\sqrt{\lambda _2(f)}$ of $f^*:H^{1,1}(X)\rightarrow H^{1,1}(X)$. It remains to show that $\lambda _1(f)$ is a simple root of the characteristic polynomial of $f^*:H^{1,1}(X)\rightarrow H^{1,1}(X)$. Assume otherwise, by using the Jordan normal form of a matrix, there will be two non-collinear vectors $u_1,u_2\in H^{1,1}(X)$ for which $f^*(u_1)=\lambda _1(f)u_1$ and $f^*(u_2)=\lambda _1(f)u_2+u_1$. Let $u=a_1u_1+a_2u_2$. For any $n\in \mathbb{N}$ we define $$v_n=\frac{a_1}{\lambda _1(f)^n}u_1-\frac{na_2}{\lambda _1(f)^{n+1}}u_1+\frac{a_2}{\lambda _1(f)^n}u_2.$$ Then it is easy to check that $(f^*)^n(v_n)=u$, and we can proceed as in the first part of the proof.
\end{proof}

\begin{proof}[Proof of Theorem \ref{TheoremTheCaseNotStable}]

1) First, we observe that for any $v\in H^{1,1}(X)$ with complex coefficients then $(f^*)^n(v).(f^*)^n(\overline{v})\geq (f^*)^n(v.\overline{v})$ for all $n\in \mathbb{N}$. For example, we show how to do this for $n=2$. Apply Proposition \ref{LemmaPullPushFormulaForMeromorphicMaps}, we have
\begin{eqnarray*} 
(f^*)^2(v).(f^*)^2(\overline{v})=f^*(f^*(v)).f^*(\overline{f^*(v)})\geq f^*(f^*(v).f^*(\overline{v})).
\end{eqnarray*}
By Proposition \ref{LemmaPullPushFormulaForMeromorphicMaps} again and the assumption that $f^*:H^{2,2}(X)\rightarrow H^{2,2}(X)$ preserves psef classes, we obtain 
\begin{eqnarray*}
f^*(f^*(v).f^*(\overline{v}))\geq (f^*)^2(v.\overline{v}),
\end{eqnarray*}
and hence $(f^*)^2(v).(f^*)^2(\overline{v})\geq (f^*)^2(v.\overline{v})$ as wanted. 

We now finish the proof of 1). Let $\omega _X$ be a K\"ahler form on $X$. Then from the first part of the proof we get 
\begin{eqnarray*}
(f^*)^n(\omega _X).(f^*)^n(\omega _X)\geq (f^*)^n(\omega _X^2),
\end{eqnarray*}
for all $n\in \mathbb{N}$. For convenience, we let $||\cdot ||$ denote an arbitrary norm on either $H^{1,1}(X)$ or $H^{2,2}(X)$. There is a constant $C>0$ independent of $n$, so that for all $n\in \mathbb{N}$, we have 
\begin{eqnarray*}
||(f^*)^n(\omega _X).(f^*)^n(\omega _X)||\leq C||(f^*)^n(\omega _X)||^2\leq C||(f^*)^n|_{H^{1,1}(X)}||^2,
\end{eqnarray*}
and 
\begin{eqnarray*}
C(f^*)^n(\omega _X^2)\geq ||(f^*)^n|_{H^{2,2}(X)}||.
\end{eqnarray*}
(In the second inequality we used the assumption that $f^*:H^{2,2}(X)\rightarrow H^{2,2}(X)$ preserves the cone of psef classes.)

Therefore, 
\begin{eqnarray*}
C^2||(f^*)^n|_{H^{1,1}(X)}||^2\geq ||(f^*)^n|_{H^{2,2}(X)}|| 
\end{eqnarray*}
for any $n\in \mathbb{N}$. Taking $n$-th root and letting $n\rightarrow\infty$, we obtain $r_1(f)^2\geq r_2(f)$.

2) Using the ideas from the proofs Theorem \ref{TheoremFirstDynamicalDegreeIsSimple} and 1), we obtain 2) immediately.

\end{proof}

\section{Analogs of Theorems \ref{TheoremFirstDynamicalDegreeIsSimple} and \ref{TheoremTheCaseNotStable} in the algebraic setting}

In this section we prove analogs of Theorems \ref{TheoremFirstDynamicalDegreeIsSimple} and \ref{TheoremTheCaseNotStable} in the algebraic setting. Throughout the section, we fix an algebraic closed field $K$ of characteristic $0$. Recall that a projective manifold over $K$ is a non-singular subvariety of a projective space $\mathbb{P}_K^N$. This section is organized as follows. In the first subsection we recall the definition and some results on algebraic cycles, the Chow's moving lemma and the Grothendieck-Hodge index theorem. In the second subsection we give definitions in the algebraic setting of dynamical degrees for rational maps and prove some basic properties of these dynamical degrees. In the last subsection we present the analogs of Theorems \ref{TheoremFirstDynamicalDegreeIsSimple} and \ref{TheoremTheCaseNotStable}. We stress that in the first two subsubsections, in particular in the definition of   dynamical degrees, we can work over any algebraic closed field, not necessarily of characteristic zero. 

\subsection{Algebraic cycles, Chow's moving lemma and Grothendieck-Hodge index theorem}
In the first subsubsection we recall some facts about algebraic cycles and the rational, algebraic and numerical equivalences. In the second and third subsubsections we recall Chow's moving lemma and Grothendieck-Hodge index theorem. In the last subsubsection we define some useful norms on the relevant vector spaces, which will be used to define dynamical degrees later. 

\subsubsection{Algebraic cycles}
Let $X\subset \mathbb{P}_K^N$ be a projective manifold of dimension $k$ over an algebraic closed field $K$ of characteristic zero. A $q$-cycle on $X$ is a finite sum $\sum n_i[V_i]$, where $V_i$ are $q$-dimensional irreducible subvarieties of $X$ and $n_i$ are integers. The group of $q$-cycles on $X$, denoted $Z_q(X)$, is the free abelian group on the $p$-dimensional subvarieties of $X$ (see Section 1.3 in Fulton \cite{fulton}). A $q$-cycle $\alpha$ is effective if it has the form
\begin{eqnarray*}
\alpha =\sum _i a_i[V_i],
\end{eqnarray*}
where $V_i$ are irreducible subvarieties of $X$ and $a_i\geq 0$.

Let $X$ and $Y$ be projective manifolds, and let $f:X\rightarrow Y$ be a morphism. For any irreducible subvariety $V$ of $X$, we define the pushforward $f_*[V]$ as follows. Let $W=f(V)$. If $dim(W)<dim (V)$, then $f_*[V]=0$. Otherwise, $f_*[V]=deg(V/W)[W]$. This gives a pushforward map $f_*:Z_q(X)\rightarrow Z_q(Y)$ (see Section 1.4 in \cite{fulton}). 

Let $p,f:X\times \mathbb{P}^1\rightarrow X, \mathbb{P}^1$ be the projections. Let $0=[0:1]$ and $\infty =[1:0]$ be the usual zero and infinity points of $\mathbb{P}^1$. We say that a cycle $\alpha$ in $Z_q(X)$ is rationally equivalent to zero if and only if there are $(q+1)$-dimensional irreducible subvarieties $V_1,...,V_t$ of $X\times \mathbb{P}^1$, such that the projections $f|_{V_i}:V_i\rightarrow \mathbb{P}^1$ are dominant, and
\begin{eqnarray*}     
\alpha =\sum _{i=1}^t([p_*(f|_{V_i}^{-1}(0))]-[p_*(f|_{V_i}^{-1}(\infty ))]).
\end{eqnarray*}
We call $V_{i,0}=[p_*(f|_{V_i}^{-1}(0))]$ and $V_{i,\infty}=[p_*(f|_{V_i}^{-1}(\infty ))]$  the specializations of $V_i$ at $0$ and $\infty$. Let $Rat_q(X)$ be the group of $q$-cycles rationally equivalent to zero. The group of $q$-cycles modulo rational equivalence on $X$ is the factor group 
\begin{eqnarray*}
A_q(X)=Z_q(X)/Rat _q(X).
\end{eqnarray*}
(See Section 1.6 in \cite{fulton}.) 

We say that a cycle $\alpha$ in $A_q(X)$ is algebraically equivalent to zero if and only if there is a non-singular variety $T$ of dimension $m$, points $t_1,t_2\in T$ which are rational over the ground field $K$, a cycle $\beta$ in $A_{k+m}(X)$ such that
\begin{eqnarray*}
  \alpha =\beta _{t_1}-\beta _{t_2},
\end{eqnarray*}
where $\beta _{t_i}$'s are specializations of $\beta$ at $t_i$'s. The group of $q$-cycles modulo algebraic equivalence on $X$ is denoted by $B_q(X)$ (see Sections 10.1 and 10.3 in \cite{fulton}).  

We write $Z^p(X)$, $A^p(X)$ and $B^p(X)$ for the corresponding groups of cycles of codimension $p$. Since $X$ is smooth, we have an intersection product $A^p(X)\times A^q(X)\rightarrow A^{p+q}(X)$, making $A^*(X)$ a ring, called the Chow's ring of $X$ (see Sections 8.1 and 8.3 in \cite{fulton}).  

For a dimension $0$ cycle $\gamma =\sum _im_i[p_i]$ on $X$, we define its degree to be $deg(\gamma )=\sum _im_i$. We say that a cycle $\alpha \in A^{p}(X)$ is numerically equivalent to zero if and only $deg(\alpha .\beta )=0$ for all $\beta \in A^{k-p}(X)$ (see Section 19.1 in \cite{fulton}). The group of codimension $p$ algebraic cycles modulo numerical equivalence is denoted by $N^p(X)$. These are finitely generated free abelian groups (see Example 19.1.4 in \cite{fulton}). The first group $N^1(X)$ is a quotient of the Neron-Severi group $NS(X)=B^1(X)$. The latter is also finitely generated, as proved by Severi and Neron. We will use the vector spaces $N^p_{\mathbb{R}}(X)=N^p(X)\otimes _{\mathbb{Z}}\mathbb{R}$ and $N^p_{\mathbb{C}}(X)=N^p(X)\otimes _{\mathbb{Z}}\mathbb{C}$ in defining dynamical degrees and in proving analogs of Theorems \ref{TheoremFirstDynamicalDegreeIsSimple} and \ref{TheoremTheCaseNotStable}.  

{\bf Remarks.} We have the following inclusions: rational equivalence $\subset $ algebraic equivalence $\subset$ numerical equivalence.

\subsubsection{Chow's moving lemma}

Let $X$ be a projective manifold of dimension $k$ over $K$. If $V$ and $W$ are two irreducible subvarieties of $X$, then either $V\cap W=\emptyset$ or any irreducible component of $V\cap W$ has dimension at least $dim (V)+dim(W)-k$. We say that $V$ and $W$ are properly intersected if any component of $V\cap W$ has dimension exactly $dim (V)+dim (W)-k$. When $V$ and $W$ intersect properly, the intersection $V.W$ is well-defined as an effective $dim(V)+dim(W)-k$ cycle. 

Given $\alpha =\sum _{i}m_i[V_i]\in Z_q(X)$ and $\beta =\sum _{j}n_j[W_j]\in Z_{q'}(X)$, we say that $\alpha .\beta$ is well-defined if every component of $V_i\cap W_j$ has the correct dimension. Chow's moving lemma says that we can always find $\alpha '$ which is rationally equivalent to $\alpha$ so that $\alpha '.\beta $ is well-defined. Since in the sequel we will need to use some specific properties of such cycles $\alpha '$, we recall here a construction of such cycles $\alpha '$, following the paper Roberts \cite{roberts}. See also the paper Friedlander-Lawson \cite{friedlander-lawson} for a generalization to moving families of cycles of bounded degrees. 

Fixed an embedding $X\subset \mathbb{P}^N_K$, we choose a linear subspace $L\subset \mathbb{P}^N_K$ of dimension $N-k-1$ such that $L\cap X=\emptyset$. For any irreducible subvariety $Z$ of $X$ we denote by $C_L(Z)$ the cone over $Z$ with vertex $L$ (see Example 6.17 in the  book Harris \cite{harris}). For any such $Z$, $C_L(Z).X$ is well-defined and has the same dimension as $Z$, and moreover $C_L(Z).X-Z$ is effective (see Lemma 2 in \cite{roberts}).

Let $Y_1,Y_2,\ldots , Y_m$ and $Z$ be irreducible subvarieties of $X$. We define the excess $e(Z)$ of $Z$ relative to $Y_1,\ldots ,Y_m$ to be the maximum of the integers $$dim (W)+k-dim (Z)-dim (Y_i),$$ 
where $i$ runs from $1$ to $m$, and $W$ runs through all components of $Z\cap Y_i$, provided that one of these integers is non-negative. Otherwise, the excess is defined to be $0$. 

More generally, if $Z=\sum _im_i[Z_i]$ is a cycle, where $Z_i$ are irreducible subvarieties of $X$, we define $e(Z)=\max _ie(Z_i)$. We then also define the cone $C_L(Z)=\sum _im_iC_{L}(Z_i)$. 
         
The main lemma (page $93$) in \cite{roberts} says that for any cycle $Z$ and any irreducible subvarieties $Y_1,\ldots ,Y_m$, then $(e(C_L(Z).X-Z))\leq \max (e(Z)-1,0)$ for generic linear subspace $L\subset \mathbb{P}^N$ of dimension $N-k-1$ such that $L\cap X=\emptyset$.

Now we can finish the proof of Chow's moving lemma as follows (see Theorem page 94 in \cite{roberts}). Given $Y_1,\ldots ,Y_m$ and $Z$ be irreducible varieties on $X$. If $e=e(Z)=0$ then $Z$ intersect properly $Y_1,\ldots ,Y_m$, hence we are done. Otherwise, $e\geq 1$. Applying the main lemma, we can find linear subspaces $L_1,\ldots ,L_e\subset \mathbb{P}^N_K$ of dimension $N-k-1$, such that if $Z_0=Z$ and $Z_i=C_{L_i}(Z_{i-1}).X-Z_{i-1}$ for $i=1,\ldots ,e=e(Z)$, then $e(Z_i)\leq e-i$. In particular, $e(Z_e)=0$. It is easy to see that
\begin{eqnarray*}
Z=Z_0=(-1)^eZ_e+\sum _{i=1}^{e}(-1)^{i-1}C_{L_i}(Z_{i-1}).X.    
\end{eqnarray*} 

It is known that there are points $g\in Aut(\mathbb{P}_K^N)$ such that $(gC_{L_i}(Z_{i-1})).X$ and $(gC_{L_i}(Z_{i-1})).Y_j$ are well-defined for $i=1,\ldots ,e$ and $j=1,\ldots ,m$. We can choose a rational curve in $Aut(\mathbb{P}_K^N)$ joining the identity map $1$ and $g$, thus see that $Z$ is rationally equivalent to
\begin{eqnarray*}
Z'=(-1)^eZ_e+\sum _{i=1}^{e}(-1)^{i-1}(gC_{L_i}(Z_{i-1})).X. 
\end{eqnarray*}
By construction, $e(Z')=0$, as desired. 

\subsubsection{Grothendieck-Hodge index theorem}

Let $X\subset \mathbb{P}^N_K$ be a projective manifold of dimension $k$. Let $H\subset \mathbb{P}^N_K$ be a hyperplane, and let $\omega _X =H|_X$. We recall that $N^p(X)$, the group of codimension $p$ cycles modulo the numerical equivalence, is a finitely generated free abelian group. We define $N^p_{\mathbb{R}}(X)=N^p(X)\otimes _{\mathbb{Z}}\mathbb{R}$ and $N^p_{\mathbb{C}}(X)=N^p(X)\otimes _{\mathbb{Z}}\mathbb{C}$. These are real (and complex) vector spaces of real (and complex) dimension equal $rank(N^p(X))$. For $p=1$, it is known that $dim _{\mathbb{R}} (N^1_{\mathbb{R}}(X)) =rank(NS(X))=:\rho$, the rank of the Neron-Severi group of $X$ (see Example 19.3.1 in \cite{fulton}). 

We define for $u,v\in N^1_{\mathbb{C}}(X)$ the Hermitian form 
\begin{eqnarray*}
\mathcal{H}(u,v)=deg(u.\overline{v}.\omega _X^{k-2}).
\end{eqnarray*}
Here the degree of a complex $0$-cycle $\alpha +i\beta$ is defined to be the complex number $deg(\alpha )+ideg(\beta )$. The analog of Hodge index theorem is the Grothendieck-Hodge index theorem, which says that $\mathcal{H}$ has signature $(1,\rho -1)$. For the convenience of the reader, we recall a sketch of the proof of the theorem here. We thank Claire Voisin for helping with this. First, observe that we can reduce the result to the case where $X$ is a surface, i.e. $dim (X)=2$. In fact, by Bertini's theorem, for generic $k-2$ ample hypersurfaces in $|H|$, their intersection is a smooth surface $\Sigma$, and
\begin{eqnarray*}  
\mathcal{H}(u,v)=deg(u|_{\Sigma}.\overline{v}|_{\Sigma})=\mathcal{H}_{\Sigma}(u|_{\Sigma},v|_{\Sigma}).   
\end{eqnarray*}
The latter is the corresponding Hermitian form on the surface $\Sigma$. The Grothendieck-Lefschetz theorem gives that the restriction of Neron-Severi groups $NS(X)\rightarrow NS(\Sigma )$ is injective, and so is the restriction map $N^1_{\mathbb{C}}(X)\rightarrow N^1_{\mathbb{C}}(\Sigma )$ (see Example 19.3.3 in \cite{fulton}). Hence we showed that the Grothendieck-Hodge index theorem is proved if it can be proved for surfaces. The latter case is well-known, see e.g. the paper Grothendieck \cite{grothendieck}.

\subsubsection{Some norms on the vector spaces $N^p_{\mathbb{R}}(X)$ and $N^p_{\mathbb{C}}(X)$}

Given $\iota :X\subset \mathbb{P}^N_K$ a projective manifold of dimension $k$, let $H\in A^1(\mathbb{P}^N)$ be a hyperplane and $\omega _X=H|_{X}=\iota ^*(H)\in A^1(X)$. For an irreducible subvariety $V\subset X$ of codimension $p$, we define the degree of $V$ to be $deg(V)=$ the degree of the dimension $0$ cycle $V.\omega _X^{k-p}$, or equivalently $deg(V)=$degree of the variety $\iota _*(V)\subset \mathbb{P}^N$. Similarly, we define for an effective codimension $p$ cycle $V=\sum _{i}m_i[V_i]$ (here $m_i\geq 0$ and $V_i$ are irreducible), the degree $deg(V)=\sum _im_ideg(V_i)$. This degree is extended to vectors in $N^p_{\mathbb{R}}(X)$. Note that the degree map is a numerical equivalent invariant.   

As a consequence of Chow's moving lemma, we have the following result on intersection of cycles

\begin{lemma}
Let $V$ and $W$ be irreducible subvarieties in $X$. Then the intersection $V.W\in A^*(X)$ can  be represented as $V.W=\alpha _1-\alpha _2$, where $\alpha _1,\alpha _2\in A^*(X)$ are effective cycles and $deg(\alpha _1),\deg(\alpha _2)\leq Cdeg(V)deg(W)$, where $C>0$ is a constant independent of $V$ and $W$. 
\label{LemmaDegreeOfIntersections}\end{lemma}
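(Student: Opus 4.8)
The plan is to run the explicit Chow moving construction recalled above, applied to $Z=V$ with the single auxiliary subvariety $Y_1=W$, and to keep careful track of degrees at each step. This produces a cycle $V'$ rationally equivalent to $V$ and intersecting $W$ properly, of the form
\begin{eqnarray*}
V'=(-1)^eV_e+\sum_{i=1}^e(-1)^{i-1}(gC_{L_i}(V_{i-1})).X,
\end{eqnarray*}
where $e=e(V)$ is the excess of $V$ relative to $W$, each $V_i=C_{L_i}(V_{i-1}).X-V_{i-1}$ is effective by Lemma 2 of \cite{roberts}, and each $(gC_{L_i}(V_{i-1})).X$ is an effective cycle meeting $W$ properly. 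Collecting the terms carrying positive and negative signs, I would write $V'=\alpha_1'-\alpha_2'$ with $\alpha_1',\alpha_2'$ effective. Since $V.W=V'.W$ in $A^*(X)$ and each piece of $V'$ meets $W$ properly, the cycles $\alpha_1:=\alpha_1'.W$ and $\alpha_2:=\alpha_2'.W$ are effective, so $V.W=\alpha_1-\alpha_2$ is the required representation.

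The degree estimate rests on two facts in $\mathbb{P}^N_K$. First, the cone $C_L(Z)$ over a subvariety $Z$ with linear vertex $L$ has the same degree as $Z$. Second, $C_L(Z)$ has dimension $\dim Z+N-k$, hence meets $X$ properly in $\mathbb{P}^N_K$, and the ordinary Bezout inequality gives $\deg(C_L(Z).X)\le\deg(Z)\deg(X)$; translations by $g\in Aut(\mathbb{P}^N_K)$ preserve degrees. Since $V_i=C_{L_i}(V_{i-1}).X-V_{i-1}$ is effective, its degree is $\deg(C_{L_i}(V_{i-1}).X)-\deg(V_{i-1})\le(\deg X-1)\deg(V_{i-1})$, whence by induction $\deg(V_i)\le(\deg X-1)^i\deg(V)$. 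The excess $e=e(V)$ is bounded by $k=\dim X$ independently of $V$ and $W$, because every component $W'$ of $V\cap W$ satisfies $\dim W'+k-\dim V-\dim W\le k-\max(\dim V,\dim W)\le k$. Thus $V'$ has at most $k+1$ terms, each of degree at most $(\deg X)^k\deg(V)$, and so $\deg(\alpha_1'),\deg(\alpha_2')\le C_1\deg(V)$ with $C_1=(k+1)(\deg X)^k$ depending only on the embedding $X\subset\mathbb{P}^N_K$.

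It then remains to bound $\deg(\alpha_j'.W)$ for $j=1,2$, where $\alpha_j'$ and $W$ meet properly inside $X$. This intersection is taken in $X$ and is in general \emph{not} proper in $\mathbb{P}^N_K$, so Bezout does not apply directly. To handle it I would again pass to a cone: for a generic linear subspace $\Lambda$ of dimension $N-k-1$ the cone $C_\Lambda(\alpha_j')$ has dimension $\dim\alpha_j'+N-k$ and degree $\deg(\alpha_j')$, and it meets $W$ properly in $\mathbb{P}^N_K$ in the correct dimension $\dim(\alpha_j'.W)$. Bezout then yields $\deg(C_\Lambda(\alpha_j').W)\le\deg(\alpha_j')\deg(W)$, and a comparison of intersection multiplicities along $\alpha_j'\cap W\subset C_\Lambda(\alpha_j')\cap W$ bounds $\deg(\alpha_j'.W)$ by $C_2\deg(\alpha_j')\deg(W)$ for a constant $C_2$ depending only on $X$. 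Combined with the previous paragraph this gives $\deg(\alpha_j)\le C_1C_2\deg(V)\deg(W)$, so $C=C_1C_2$ works.

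The main obstacle is precisely this last step: a uniform Bezout-type inequality for proper intersections that live inside the fixed smooth variety $X$ rather than in the ambient $\mathbb{P}^N_K$. The cone device reduces it to ordinary Bezout in $\mathbb{P}^N_K$, but the point requiring care is the multiplicity comparison showing that the degree of the intersection computed in $X$ is controlled by the degree of the auxiliary proper intersection computed in $\mathbb{P}^N_K$; equivalently, one may invoke a refined Bezout inequality for $X$ with a constant depending only on the embedding. By contrast, the degree amplification through the cone steps of the moving lemma (the factor $(\deg X-1)^i$) is routine once the cone-degree facts are in hand, the only thing to verify being that the number of moving steps, namely the excess, is bounded independently of $V$ and $W$.
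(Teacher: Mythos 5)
Your proposal follows the same route as the paper: run Roberts' explicit moving construction (on $V$ rather than $W$, which is immaterial), bound the degrees of the cone terms by $(\deg X)^k\deg(V)$, bound the excess by $k$, and then intersect with the other variety. The degree bookkeeping in your second paragraph matches the paper's. However, the step you yourself single out as ``the main obstacle'' --- bounding $\deg(\alpha_j'.W)$ for a proper intersection taken inside $X$ rather than in $\mathbb{P}^N_K$ --- is left as an unproved ``comparison of intersection multiplicities'' or an appeal to an unspecified ``refined Bezout inequality.'' That is precisely the only nontrivial content of the lemma, so as written the proof has a genuine gap at its crux.

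The paper closes this gap with the projection formula for the closed embedding $\iota:X\hookrightarrow\mathbb{P}^N_K$, and it is worth seeing why no extra constant $C_2$ and no separate multiplicity analysis are needed. Most of the terms of the moved cycle are already of the form $(gC_{L_i}(V_{i-1})).X$, i.e.\ restrictions to $X$ of cones living in $\mathbb{P}^N_K$; for such a term the projection formula gives $\iota_*\bigl((gC_{L_i}(V_{i-1}).X).W\bigr)=gC_{L_i}(V_{i-1}).\iota_*(W)$, and the right-hand side is a proper intersection in $\mathbb{P}^N_K$ of degree exactly $\deg(C_{L_i}(V_{i-1}))\deg(W)\le(\deg X)^k\deg(V)\deg(W)$ by Bezout, since $A^*(\mathbb{P}^N_K)$ is generated by the hyperplane class. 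So no second cone is needed for these terms. Only the residual term $V_e$ requires your cone device: choose $\Lambda$ generic so that $C_\Lambda(V_e).X$ and $C_\Lambda(V_e).W$ are both well-defined; then $C_\Lambda(V_e).X-V_e$ is effective (Roberts, Lemma 2) and meets $W$ properly, so $(C_\Lambda(V_e).X).W-V_e.W$ is an effective cycle, and the projection formula applied to the first term gives $\deg(V_e.W)\le\deg\bigl(C_\Lambda(V_e).\iota_*(W)\bigr)=\deg(V_e)\deg(W)$. This is the rigorous form of your multiplicity comparison, with $C_2=1$; note that it requires the genericity of $\Lambda$ relative to $W$ (so that the residual intersection is proper and hence effective), a hypothesis your sketch does not impose. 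Grouping the terms into $\alpha_1'$ and $\alpha_2'$ before intersecting, as you do, obscures this: it is cleaner to intersect term by term exactly because the two kinds of terms are handled by slightly different arguments.
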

\begin{proof}
Using Chow's moving lemma, $W$ is rationally equivalent to 
\begin{eqnarray*}
W'=\sum _{i=1}^e(-1)^{i-1}gC_{L_i}(W_{i-1}).X+(-1)^eW_e,
\end{eqnarray*}
where $W_0=W$, $W_i=C_{L_i}(W_{i-1}).X-W_{i-1}$, $C_{L_i}(W_{i-1})\subset \mathbb{P}^N_K$ is a cone over $W_{i-1}$, and $g\in Aut(\mathbb{P}^N_K)$ is an automorphism. Moreover, $gC_{L_i}(W_{i-1}).X$, $gC_{L_i}(W_{i-1}).V$ and $W_e.V$ are all well-defined. We note that $e\leq k=dim (X)$, and for any $i=1,\ldots ,e$ 
\begin{eqnarray*} 
deg(W_i)&\leq& deg (gC_{L_i}(W_{i-1}).X)\leq deg(gC_{L_i}(W_{i-1}))deg(X)\\
&=&deg(C_{L_i}(W_{i-1})).deg(X)=deg(W_{i-1})deg(X).
\end{eqnarray*}
Here we used that $deg(C_{L_i}(W_{i-1})=deg(W_{i-1})$ (see Example 18.17 in \cite{harris}), and $deg(gC_{L_i}(W_{i-1})=deg(C_{L_i}(W_{i-1})$ because $g$ is an automorphism of $\mathbb{P}^N$ (hence a linear map).  

Therefore, the degrees of $W_i$ are all $\leq (deg(X))^kdeg(W)$. By definition, the intersection product $V.W\in A^*(X)$ is given by $V.W'$, which is well-defined. We now estimate the degrees of each effective cycle $gC_{L_i}(W_{i-1})|_X.V$ and $W_e.V$. Firstly, we have by the projection formula
\begin{eqnarray*}
deg(gC_{L_i}(W_{i-1})|_X.V)&=&deg(\iota _*(gC_{L_i}(W_{i-1})|_X.V))=deg (gC_{L_i}(W_{i-1}).\iota _*(V))\\
&=&deg(C_{L_i}(W_{i-1})).deg (V)\leq deg(X)^kdeg(W)deg(V).
\end{eqnarray*}  
Finally, we estimate the degree of $W_e.V$. Since $W_e.V$ is well-defined, we can choose a linear subspace $L\subset \mathbb{P}^N$ so that $C_L(W_e).X$ and $C_L(W_e).V$ are well-defined. Recall that $C_L(W_e)-W_e$ is effective, we have
\begin{eqnarray*}
deg(V.W_e)\leq deg(V.C_L(W_e)|_{X})=deg (V).deg(C_L(W_e))\leq deg(X)^kdeg(V)deg(W). 
\end{eqnarray*}

From these estimates, we see that we can write
\begin{eqnarray*}
V.W'=\alpha _1-\alpha _2,
\end{eqnarray*}
where $\alpha _1,\alpha _2$ are effective cycles and $deg(\alpha _1),deg(\alpha _2)\leq Cdeg(V)deg(W)$, where $C=k.deg(X)^k$ is independent of $V$ and $W$.
\end{proof}

Using this degree map, we define for an arbitrary vector $v\in N^p_{\mathbb{R}}(X)$, the norm 
\begin{equation}
||v||_1=\inf \{deg(v_1)+deg(v_2):~v=v_1-v_2,~v_1, v_2\in N^p_{\mathbb{R}}(X) \mbox{ are effective}\}.
\label{LabelDefinitionNorm}\end{equation}
We check that this is actually a norm. It is easy to check that $||\lambda v||_1=|\lambda | ||v||_1$ for any $\lambda \in \mathbb{R}$ and $v\in N^p_{\mathbb{R}}(X)$. The triangle inequality is also easy to prove. It remains to check that if $||v||_1=0$ then $v=0$. In fact, if $||v||_1=0$ then by definition there are sequences $v_{1,n},v_{2,n}\in N^p_{\mathbb{R}}(X)$ of effective cycles so that $v=v_{1,n}-v_{2,n}$ and $deg(v_{1,n}),deg(v_{2,n})\rightarrow 0$. From Lemma \ref{LemmaDegreeOfIntersections}, we have that for any $w\in N^{k-p}_{\mathbb{R}}(X)$ 
\begin{eqnarray*} 
deg(v.w)=\lim _{n\rightarrow\infty}deg((v_{1,n}-v_{2,n}).w)=0.
\end{eqnarray*}
Hence $v=0$, since from the definition of $N^p(X)$, the bilinear form $N^p(X)\times N^{k-p}(X)\rightarrow \mathbb{Z}$, $(v,w)\mapsto deg(v.w)$ is non-degenerate. (In fact, let us choose a basis $(v_i)_{i\in I}$ for $N^p(X)$ and a basis $(w_j)_{j\in J}$ for $N^{k-p}(X)$. These are also bases for the corresponding real vector spaces. Let $v=\sum _{i}a_iv_i$, where $a_i\in \mathbb{R}$. Now $deg(v.w)=0$ for every $w\in N^{k-p}_{\mathbb{R}}(X)$ if and only if $deg(v.w_j)=0$ for every $j\in J$. The latter is a system of homogeneous equations in $a_i$ with integer coefficients $deg(v_i.w_j)$, therefore it has a non-trivial solution $(a_i)\in \mathbb{R}^I$ if and only if it has a non-trivial solution $(a_i)\in \mathbb{Z}^I$. But there is no non-trivial solution $(a_i)\in \mathbb{Z}^I$ to the system because the bilinear form $N^p(X)\times N^{k-p}(X)\rightarrow \mathbb{Z}$, $(v,w)\mapsto deg(v.w)$ is non-degenerate. Hence there is no non-trivial solution $(a_i)\in \mathbb{R}^I$ to the system, i.e. if $v\in N^{p}_{\mathbb{R}}(X)$ such that $deg(v.w)=0$ for all $w\in N^{k-p}_{\mathbb{R}}(X)$ then $v=0$.)

{\bf Remark.} It is easy to check that if $v\in N^p_{\mathbb{R}}(X)$ is effective, then $||v||_1=deg(v)$. Since $N^p_{\mathbb{R}}(X)$ is of finite dimensional, any norm on it is equivalent to $||\cdot ||_1$. We can also complexify these norms to define norms on $N^p_{\mathbb{C}}(X)$.

\subsection{Dynamical degrees and $p$-stability}
In the first subsubsection we consider pullback and strict transforms of algebraic cycles by rational maps. In the second subsubsection we define dynamical degrees and prove some of their basic properties. In the last subsubsection we define $p$-stability. 

\subsubsection{Pullback and strict transforms of algebraic cycles by rational maps}
Let $X$ and $Y$ be two projective manifolds and $f:X\rightarrow Y$ a dominant rational map. Then we can define the pushforward operators $f_*:A_q(X)\rightarrow A_q(Y)$ and pullback operators $f^*:A^p(Y)\rightarrow A^p(X)$ (see Chapter 16 in \cite{fulton}). For example, there are two methods to define the pullback operators: 

Method 1: Let $\pi _X,\pi _Y:X\times Y\rightarrow X,Y$ be the two projections, and let $\Gamma _f$ be the graph of $f$. For $\alpha \in A^p(Y)$, we define $f^*(\alpha )\in A^p(X)$ by the following formula
\begin{eqnarray*}
f^*(\alpha )=(\pi _X)_*(\Gamma _f.\pi _Y^*(\alpha )). 
\end{eqnarray*}

Method 2: Let $\Gamma \rightarrow \Gamma _f$ be a resolution of singularities of $\Gamma _f$, and let $p,g:\Gamma \rightarrow X,Y$ be the induced morphisms. then we define 
\begin{eqnarray*}
f^*(\alpha )=p_*(g^*(\alpha )). 
\end{eqnarray*}

For the convenience of the readers, we recall here the arguments to show the equivalences of these two methods. Firstly, we show that the definition in Method 2 is independent of the choice of the resolution of singularities of $\Gamma _f$. In fact, let $\Gamma _1,\Gamma _2\rightarrow \Gamma _f$ be two resolutions of $\Gamma _f$ with the induced morphisms $p_1,g_1$ and $p_2,g_2$. Then there is another resolution of singularities $\Gamma \rightarrow \Gamma _f$ which dominates both $\Gamma _1$ and $\Gamma _2$ (e.g. $\Gamma$ is a resolution of singularities of the graph of the induced birational map $\Gamma _1\rightarrow \Gamma _2$). Let $\tau _1,\tau _2:\Gamma \rightarrow \Gamma _1,\Gamma _2$ the corresponding morphisms, and $p=p_1\circ \tau _1=p_2\circ \tau _2:\Gamma \rightarrow X$ and $g=g_1\circ \tau _1=g_2\circ \tau _2:\Gamma \rightarrow Y$ the induced morphisms. For $\alpha \in A^p(Y)$, we will show that $(p_1)_*(g_1^*\alpha )=p_*(g^*\alpha )=(p_2)_*(g_2^*\alpha )$. We show for example the equality $(p_1)_*(g_1^*\alpha )=p_*(g^*\alpha )$. In fact, we have by the projection formula 
\begin{eqnarray*}
p_*g^*(\alpha )&=&(p_1\circ \tau _1)_*(g_1\circ \tau _1)^*\alpha\\
&=&(p_1)_*(\tau _1)_*(\tau _1)^*(g_1)^*(\alpha )\\
&=&(p_1)_*(g_1)^*(\alpha ),
\end{eqnarray*}
as wanted. Finally, we show that the definitions in Method 1 and Method 2 are the same. By the embedded resolution of singularities (see e.g. the book \cite{kollar}), there is a finite blowup $\pi :\widetilde{X\times Y}\rightarrow X\times Y$ so that the strict transform $\Gamma$ of $\Gamma _f$ is smooth. Hence $\Gamma$ is a resolution of singularities of $\Gamma _f$, and $p=\pi _X\circ \pi \circ \iota , g=\pi _Y\circ \pi \circ\iota :\Gamma \rightarrow X,Y$ are the induced maps, where $\iota :\Gamma \subset \widetilde{X\times Y}$ is the inclusion map. For $\alpha \in A^p(Y)$, we have by the projection formula
\begin{eqnarray*}
p_*g^*(\alpha )&=&(\pi _X)_*\pi _* \iota _*\iota ^*\pi ^*\pi _Y^*(\alpha )=(\pi _X)_*\pi _* [\pi ^*\pi _Y^*(\alpha ).\Gamma ]\\
&=&(\pi _X)_*[\pi _Y^*(\alpha ).\pi _*(\Gamma )]=(\pi _X)_*[\pi _Y^*(\alpha ).\Gamma _f],
\end{eqnarray*}   
as claimed. 

In defining dynamical degrees and proving some of their basic properties, we need to estimate the degrees of the pullback and of strict transforms by a meromorphic map of a cycle. We present these estimates in the remaining of this subsubsection. We fix a resolution of singularities $\Gamma $ of the graph $\Gamma _f$, and let $p,g:\Gamma \rightarrow X,Y$ be the induced morphisms. By the theorem on the dimension of fibers (see e.g. the corollary of Theorem 7 in Section 6.3 Chapter 1 in the book Shafarevich \cite{shafarevich}), the sets
\begin{eqnarray*} 
V_{l}=\{y\in Y:~dim(g^{-1}(y))\geq l\}
\end{eqnarray*}
are algebraic varieties of $Y$. We denote by $\mathcal{C}_g=\cup _{l>dim (X)-dim (Y)}V_l$ the critical image of $g$. We have the first result considering the pullback of a subvariety of $Y$

\begin{lemma}
Let $W$ be an irreducible subvariety of $Y$. If $W$ intersects properly any irreducible component of $V_l$ (for any $l>dim(X)-dim(Y)$), then $g^*[W]=[g^{-1}(W)]$ is well-defined as a subvariety of $\Gamma$. Moreover this variety represents the pullback $g^*(W)$ in $A^*(\Gamma )$.  
\label{LemmaGoodPullbackVariety}\end{lemma}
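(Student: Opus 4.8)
The plan is to separate the statement into two parts: first, that the set-theoretic preimage $g^{-1}(W)$ has pure codimension equal to $c:=dim(Y)-dim(W)$ in $\Gamma$, so that $[g^{-1}(W)]$ is a well-defined cycle of the expected dimension; and second, that this cycle represents the class $g^*(W)\in A^c(\Gamma)$. Write $m=dim(\Gamma)=dim(X)$ (recall $p:\Gamma\rightarrow X$ is birational) and $n=dim(Y)$; since $f$ is dominant, $g$ is surjective with generic fiber dimension $m-n$, so $V_{m-n}=Y$, while the loci $V_l$ with $l>m-n$ are the proper subvarieties making up the critical image $\mathcal{C}_g$.

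For the upper bound on dimension I would note that $Y$ is smooth, so near any of its points $W$ is a component of the zero locus of $c$ regular functions; pulling these back by $g$ and applying Krull's principal ideal theorem shows every component of $g^{-1}(W)$ has codimension at most $c$ in $\Gamma$. The real content is the reverse inequality, which is exactly where the proper-intersection hypothesis enters. Let $Z$ be an irreducible component of $g^{-1}(W)$, put $Z'=\overline{g(Z)}\subseteq W$, and let $d=dim(Z)-dim(Z')$ be the dimension of the generic fibre of $g|_Z:Z\rightarrow Z'$. Since such a generic fibre is contained in the corresponding fibre of $g$, we get $dim(g^{-1}(y))\geq d$ for generic $y\in Z'$, whence $Z'\subseteq V_d$. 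If $d\leq m-n$, then directly $dim(Z)=dim(Z')+d\leq dim(W)+(m-n)=m-c$. If instead $d>m-n$, then $Z'$ lies in some irreducible component $V'$ of $V_d$; the hypothesis that $W$ meets $V'$ properly gives $dim(Z')\leq dim(W)+dim(V')-n$, while the fiber-dimension inequality $dim(g^{-1}(V'))\geq dim(V')+d$ together with $g^{-1}(V')\subseteq \Gamma$ forces $dim(V')\leq m-d$. Combining, $dim(Z)=dim(Z')+d\leq (n-c)+(m-d)-n+d=m-c$. In either case $dim(Z)\leq m-c$, so $g^{-1}(W)$ has pure codimension $c$.

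For the second assertion I would reduce to the diagonal. Since $Y$ is smooth, the graph morphism $\gamma:\Gamma\rightarrow \Gamma\times Y$, $x\mapsto (x,g(x))$, is a regular embedding of codimension $n$, and the pullback factors as $g^*=\gamma ^!\circ \pi _Y^*$, where $\pi _Y:\Gamma\times Y\rightarrow Y$ is the projection and $\gamma ^!$ is the refined Gysin homomorphism. As $\pi _Y^*[W]=[\Gamma\times W]$ and $\gamma ^{-1}(\Gamma\times W)=g^{-1}(W)$, the class $g^*(W)=\gamma ^![\Gamma\times W]$ is supported on $g^{-1}(W)$; because we have just shown this intersection has the expected dimension, Fulton's compatibility of the refined Gysin homomorphism with properly dimensioned intersections identifies $\gamma ^![\Gamma\times W]$ with the cycle $[g^{-1}(W)]$, which is the desired conclusion.

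The main obstacle is the reverse dimension inequality, i.e. ruling out excess-dimensional components of $g^{-1}(W)$: such a component can only arise from $Z'$ sitting inside a high-jump locus $V_d$ with $d>m-n$, and it is precisely the proper-intersection hypothesis, combined with the bound $dim(V')\leq m-d$ coming from the total space $\Gamma$ being $m$-dimensional, that excludes it. A secondary point requiring care is the multiplicity bookkeeping in the final identification; I would phrase the conclusion as ``$g^*(W)$ is represented by the cycle $[g^{-1}(W)]$'' and rely on the proper-intersection formula for $\gamma ^!$ rather than asserting reducedness of the scheme-theoretic preimage.
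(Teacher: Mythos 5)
Your proposal is correct and follows essentially the same route as the paper: both reduce the lemma to showing that every component of $g^{-1}(W)$ has the expected dimension (delegating the identification of the resulting cycle with $g^*(W)$ to Fulton's intersection theory), and both obtain the dimension bound by combining the theorem on the dimension of fibers with the proper-intersection hypothesis against the irreducible components of the jump loci $V_l$. Your explicit Krull lower bound and your caveat about intersection multiplicities are, if anything, slightly more careful than the paper's own treatment, which writes $g^*[W]=[g^{-1}(W)]$ without discussing multiplicities.
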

\begin{proof} (See also Example 11.4.8 in \cite{fulton}.) By the intersection theory (see Section 8.2 in \cite{fulton} and Theorem 3.4 in \cite{friedlander-lawson}), it suffices to show that $g^{-1}(W)$ has the correct dimension $dim(X)-dim(Y)+dim(W)$. First, if $y\in W-\mathcal{C}_g$ then $dim(g^{-1}(y))=dim(X)-dim(Y)$ by definition of $\mathcal{C}_g$. Hence $dim (g^{-1}(W-\mathcal{C}_g)=dim (W)+dim(X)-dim (Y)$. It remains to show that $g^{-1}(W\cap \mathcal{C}_g)$ has dimension $\leq dim(X)+dim(W)-dim (Y)-1$. Let $Z$ be an irreducible component of $W\cap \mathcal{C}_g$. We define $l=\inf \{dim(g^{-1}(y)):~y\in Z\}$. Then $l>dim (X)-dim(Y)$ and for generic $y\in Z$ we have $dim(g^{-1}(y))=l$ (see Theorem 7 in Section 6.3 in Chapter 1 in \cite{shafarevich}). Let $V\subset V_l$ be an irreducible component containing $Z$.  By assumption $V.W$ has dimension $dim(V)+dim(W)-dim(Y)$, hence $dim(Z)\leq dim (V)+dim(W)-dim (Y)$. We obtain $$dim(g^{-1}(Z-V_{l+1}))=l+dim(Z)\leq l+dim(V)+dim(W)-dim (Y).$$ Since $g$ is surjective (because $f$ is dominant) and $V\not= Y$, it follows that $$dim (X)-1\geq dim (g^{-1}(V))\geq dim (V)+l.$$ From these last two estimates we obtain 
\begin{eqnarray*}
dim (g^{-1}(Z-V_{l+1}))&=&l+dim(V)+dim(W)-dim (Y)\\
&\leq& dim (X)-1+dim (W)-dim (Y).
\end{eqnarray*}
Since there are only a finite number of such components, it follows that $dim(g^{-1}(W\cap \mathcal{C}_g))\leq dim (W)+dim (X)-dim(Y)-1$, as claimed. 
\end{proof} 
 
We next estimate the degree of the pullback of a cycle. Fix an embedding $Y\subset \mathbb{P}^N_K$, and let $\iota :Y\subset \mathbb{P}^N_K$ the inclusion. Let $H\subset \mathbb{P}^N_K$ be a generic hyperplane and let $\omega _Y=H|_{Y}$. 
 
\begin{lemma} 
a) Let $p=0,\ldots ,dim (Y)$, and let $Z\subset X$ be a proper subvariety. Then there is a linear subspace $H^p\subset \mathbb{P}^N_K$ of codimension $p$ such that $H^p$ intersects $Y$ properly, $f^*(\iota ^*(H^p))$ is well-defined as a subvariety of $X$, and $f^*(\iota ^*(H^p))$ has no component on $Z$. In particular, for any non-negative integer $p$, the pullback $f^*(\omega _Y^p)\in A^{p}(X)$ is effective. 

b) Let $W$ be an irreducible of codimension $p$ in $Y$. Then in $A^p(X)$, we can represent $f^*(W)$ by $\beta _1-\beta _2$, where $\beta _1$ and $\beta _2$ are effective and $\beta _1,\beta _2\leq Cdeg(W)f^*(\omega _Y^p)$ for some constant $C>0$ independent of the variety $W$, the manifold $X$ and the map $f$.
\label{LemmaDegreeOfPullback}\end{lemma}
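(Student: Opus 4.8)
The plan is to compute both pullbacks through Method 2, fixing once and for all the resolution $\Gamma$ of $\Gamma_f$ with its morphisms $p,g:\Gamma\rightarrow X,Y$, so that $f^*(\cdot)=p_*(g^*(\cdot))$, and to arrange throughout that we only ever pull back cycles that meet the loci $V_l$ (for $l>\dim(X)-\dim(Y)$) properly. For such cycles Lemma~\ref{LemmaGoodPullbackVariety} guarantees that $g^*$ is given by the honest preimage $[g^{-1}(\cdot)]$, hence is effective, and then $p_*$ keeps effectivity; this is what lets us avoid any excess-intersection subtleties. The two engines are Lemma~\ref{LemmaGoodPullbackVariety} and the cone construction underlying Chow's moving lemma together with the degree estimates already used in Lemma~\ref{LemmaDegreeOfIntersections}.

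For part a) I would argue by genericity of the codimension $p$ linear subspace $H^p\subset\mathbb{P}^N_K$. Parametrizing such $H^p$ by a Grassmannian, I impose three open dense conditions. First, by Bertini, $H^p$ meets $Y$ properly, so $W:=\iota^*(H^p)=H^p\cap Y$ is an effective codimension $p$ cycle with $\{W\}=\omega_Y^p$. Second, I require $W$ to meet properly every one of the finitely many irreducible components of every $V_l$; then Lemma~\ref{LemmaGoodPullbackVariety} applies and $g^*(W)=[g^{-1}(W)]$ is an effective cycle on $\Gamma$ of dimension $\dim(\Gamma)-p$. Third, to get the ``no component on $Z$'' statement, I note that since $p$ is birational and $Z$ is proper in $X$ one has $\dim(p^{-1}(Z))\le\dim(X)-1$; a dimension count (proper intersection of the generic section $W$ with the image $g(p^{-1}(Z))$, plus the generic fibre dimension of $g$ over that image) then forces $g^{-1}(W)\cap p^{-1}(Z)$ to have dimension $<\dim(X)-p$, so no component of $g^{-1}(W)$ lies in $p^{-1}(Z)$ and no component of $f^*(W)=p_*(g^{-1}(W))$ lies in $Z$. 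Taking classes gives that $f^*(\omega_Y^p)$ is effective.

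For part b) the plan is to reduce the estimate for an irreducible $W$ of codimension $p$ and degree $d=\deg(W)$ to the case of cycles that pull back well, via Chow's moving lemma in the explicit cone form recalled above. First I move $W$ to a rationally equivalent cycle $W'=\sum_i\varepsilon_iU_i$ (with $\varepsilon_i=\pm1$), where each $U_i$ is an effective cone-intersection cycle, the number of terms is at most $\dim(X)+1$, and, by the same degree bookkeeping as in Lemma~\ref{LemmaDegreeOfIntersections}, $\deg(U_i)\le(\deg Y)^{\dim X}\,d$; I choose the cone vertices generically so that, in addition, each $U_i$ meets the $V_l$ properly. Since $f^*$ is defined on classes, $f^*(W)=f^*(W')=\sum_i\varepsilon_i f^*(U_i)$, and each $f^*(U_i)=p_*(g^{-1}(U_i))$ is effective by Lemma~\ref{LemmaGoodPullbackVariety}. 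Grouping the signs gives $\beta_1=\sum_{\varepsilon_i=+}f^*(U_i)$ and $\beta_2=\sum_{\varepsilon_i=-}f^*(U_i)$, both effective with $f^*(W)=\beta_1-\beta_2$.

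It remains to bound each good effective $U_i$ of degree $\delta_i$ by $f^*(U_i)\le\delta_i\,f^*(\omega_Y^p)$ at the level of classes, and this cone comparison is the crux. For a generic vertex $M$ the ambient cone satisfies: $C_M(U_i).Y-U_i$ is effective, both $C_M(U_i).Y$ and this difference meet the $V_l$ properly, and---because $C_M(U_i)$ lives in $\mathbb{P}^N_K$, where classes are detected by degree---the restriction identity $\{C_M(U_i).Y\}=\delta_i\,\omega_Y^p$ holds in $A^p(Y)$. Applying $g^{-1}$ and $p_*$ to the cycle identity $C_M(U_i).Y=U_i+(C_M(U_i).Y-U_i)$, all terms being good, yields $f^*(C_M(U_i).Y)=f^*(U_i)+f^*(C_M(U_i).Y-U_i)$ with the last summand effective; taking classes and using $\{f^*(C_M(U_i).Y)\}=\delta_i\,f^*(\omega_Y^p)$ gives the desired inequality. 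Summing over the at most $\dim(X)+1$ terms produces the constant $C=(\dim(X)+1)(\deg Y)^{\dim X}$, which is independent of $W$, $X$ and $f$. The main obstacle I anticipate is precisely this simultaneous genericity: one must choose all the linear subspaces and cone vertices so that every cone, every cone-difference and every section meets the finitely many components of all the $V_l$ properly at once, while preserving both the degree bounds and the rational-equivalence telescoping. It is exactly to stay inside the proper-intersection regime of Lemma~\ref{LemmaGoodPullbackVariety}, and thereby keep all pullbacks manifestly effective, that one works with these cones rather than pulling $W$ back directly.
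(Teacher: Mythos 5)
Your proposal is correct and follows essentially the same route as the paper: for a) a generic linear section combined with Lemma \ref{LemmaGoodPullbackVariety} and a dimension count over $g(p^{-1}(Z))$, and for b) the cone form of Chow's moving lemma to decompose $W$ into pieces meeting the $V_l$ properly, with each effective piece dominated by an ambient cone whose class restricts to $\deg\cdot\,\omega_Y^p$. The only (cosmetic) difference is bookkeeping: the paper applies the cone-domination step only to the residual term $W_e$ and handles the ambient cone terms $\iota^*(\alpha_1),\iota^*(\alpha_2)$ directly by degree, whereas you apply the domination uniformly to every term.
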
 
\begin{proof}

Since by definition $f^*(W)=p_*g^*(W)$ and since $p_*$ preserves effective classes, it suffices to prove the lemma for the morphism $g$. We let the varieties $V_l$ as those defined before Lemma \ref{LemmaGoodPullbackVariety}.  

a) Let $H^p\subset \mathbb{P}^N_K$ be a generic codimension $p$ linear subspace. Then in $A^p(Y)$, $\omega _Y^p$ is represented by $\iota ^*(H^p)$. We can choose such an $H^p$ so that $H^p$ intersects properly $Y$, $g(Z)$ and all irreducible components of $V_l$ and $g(Z)\cap V_l$ for all $l>dim(X)-dim(Y)$. By Lemma \ref{LemmaGoodPullbackVariety}, the pullback $g^*(\iota ^*(H^p))=g^{-1}(\iota ^*(H^p))$ is well-defined as a subvariety of $\Gamma$. Moreover, the dimension of $g^{-1}(\iota ^*(H^p))\cap Z$ is less than the dimension of $g^{-1}(\iota ^*(H^p))$. In particular, $g^*(\iota ^*(H^p))$ is effective and has no component on $Z$. 

b) By Chow's moving lemma, $W$ is rationally equivalent to $\iota^*(\alpha _1)-\iota^*(\alpha _2)\pm W_e$, where $\alpha _1,\alpha _2\subset \mathbb{P}^N_K$  and $W_e\subset Y$ are subvarieties of codimension $p$, and they intersect properly $Y$ and all irreducible components of $V_l$ for all $l>dim(X)-dim(Y)$. Moreover, $deg(\alpha _1),deg(\alpha _2),deg(W_e)\leq Cdeg(W)$, for some $C>0$ independent of $W$. By the proof of Chow's moving lemma, we can find a codimension $p$ variety $\alpha\subset \mathbb{P}^N_K$ so that $\alpha$ intersect properly with $Y$ and all $V_l$, $\iota ^*(\alpha ) -W_e$ is effective, and $deg(\alpha )\leq Cdeg(W_e)$. Note that in $A^p(Y)$ we have $\iota ^*(\alpha _1)\sim deg(\alpha _1)\omega _Y^p$, $\iota ^*(\alpha _2)\sim deg(\alpha _2)\omega _Y^p$ and $\iota ^*(\alpha )\sim deg(\alpha )\omega _Y^p$. Note also that $0\leq g^*(W_e)\leq g^*(\iota ^*(\alpha ))$. Therefore, in $A^p(\Gamma )$
\begin{eqnarray*}
g^*(W)\sim deg(\alpha _1)g^*(\omega _Y^p)-deg (\alpha _2)g^*(\omega _Y^p)\pm g^*(W_e),    
\end{eqnarray*}
where each of the three terms on the RHS is effective and $\leq Cdeg(W)g^*(\omega _Y ^p)$ for some $C>0$ independent of $W$, $X$ and $f$.
\end{proof}
\begin{lemma}
Let $f:X\rightarrow Y$ be a rational map. For any $p=0,\ldots ,dim(Y)-1$, we have
$$
f^{*}(\omega _Y^{p+1})\leq f^*(\omega _Y^p).f^*(\omega _Y)
$$
in $A^{p+1}(X)$.
\label{LemmaBoundForPullbackKahlerForms}\end{lemma}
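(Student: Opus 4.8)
The plan is to resolve $f$ and then convert the desired inequality into a \emph{negativity} statement for the birational projection onto $X$, using that pullback by an honest morphism is multiplicative. Fix a resolution $\Gamma$ of the graph $\Gamma_f$ with induced morphisms $p:\Gamma\rightarrow X$ and $g:\Gamma\rightarrow Y$, so that $f^*=p_*g^*$. Set $\ell=g^*(\omega_Y)\in A^1(\Gamma)$ and $a=g^*(\omega_Y^p)\in A^p(\Gamma)$; by Lemma \ref{LemmaDegreeOfPullback} the class $a$ is effective, and since $\omega_Y=H|_Y$ is ample, $\ell$ is nef. Because $g$ is a morphism, $g^*$ is a ring homomorphism, so $g^*(\omega_Y^{p+1})=a\cdot\ell$; pushing forward gives $f^*(\omega_Y^{p+1})=p_*(a\cdot\ell)$, while $f^*(\omega_Y^p)\cdot f^*(\omega_Y)=p_*(a)\cdot p_*(\ell)$. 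Hence the lemma is equivalent to the assertion that $p_*(a)\cdot p_*(\ell)-p_*(a\cdot\ell)$ is represented by an effective cycle.

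First I would rewrite the left-hand product by the projection formula (Section 8.1 in \cite{fulton}), applied with $c=p_*(\ell)\in A^1(X)$ and the class $a$, namely $p_*\bigl(p^*(p_*\ell)\cdot a\bigr)=p_*(\ell)\cdot p_*(a)$. This gives
\begin{eqnarray*}
p_*(a)\cdot p_*(\ell)-p_*(a\cdot\ell)=p_*\bigl((p^*p_*(\ell)-\ell)\cdot a\bigr).
\end{eqnarray*}
Now the product of two effective classes is again effective (move one factor into general position by Chow's moving lemma and intersect), and $p_*$ preserves effectivity. Since $a$ is effective, the whole right-hand side is effective provided that $B:=p^*p_*(\ell)-\ell$ is effective. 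Thus everything is reduced to this single positivity property of $B$.

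The effectivity of $B$ is the step I expect to be the main obstacle, and it is exactly the content of the negativity lemma. Indeed, $B$ is a combination of $p$-exceptional divisors: one has $p_*(B)=p_*p^*(p_*\ell)-p_*(\ell)=0$, and on the smooth variety $\Gamma$ a divisor class with vanishing pushforward lies in the span of the exceptional divisors (equivalently $N^1(\Gamma)_{\mathbb{R}}=p^*N^1(X)_{\mathbb{R}}\oplus\langle\text{exceptional divisors}\rangle$). Moreover $-B$ is $p$-nef: for any curve $C$ contracted by $p$ we have $p^*p_*(\ell)\cdot C=p_*(\ell)\cdot p_*(C)=0$, so $(-B)\cdot C=\ell\cdot C\geq0$ by nefness of $\ell$. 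The negativity lemma for proper birational morphisms (see e.g. \cite{kollar}) then yields that $B$ is effective, completing the argument. For a single blowup $\pi:Z\rightarrow X$ along a smooth centre this is transparent and needs no extra input: Lemma \ref{LemmaPullPushFormulaForOneBlowup} ii) gives $\pi^*\pi_*(\ell)-\ell=(\ell\cdot L)[E]$, which is effective because $\ell\cdot L\geq0$; the negativity lemma is precisely what upgrades this to an arbitrary composition of blowups, where a naive induction fails since the pushforward $\pi_*(\ell)$ of a nef class need not remain nef.
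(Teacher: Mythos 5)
Your overall strategy---resolve the graph, rewrite the defect as $p_*\bigl((p^*p_*(\ell)-\ell)\cdot a\bigr)$ via the projection formula, and prove effectivity of $B=p^*p_*(\ell)-\ell$ by the negativity lemma---is a genuinely different route from the paper's and can be made to work, but one step rests on a false general principle. You assert that ``the product of two effective classes is again effective,'' justified by Chow's moving lemma. That is wrong: Chow's moving lemma replaces a cycle by a \emph{difference} of effective cycles in general position (this is exactly how the paper states Lemma \ref{LemmaDegreeOfIntersections}), and products of effective classes are not effective in general---the paper's own Lemma \ref{LemmaPullPushFormulaForOneBlowupAlgebraicCase} i), $\pi_*(E.E)=-W$, is a counterexample, and if the principle held it would trivialize Proposition \ref{LemmaPullPushFormulaForMeromorphicMapsAlgebraicCase}. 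What saves your argument is that $B$ is a \emph{divisor} class: choosing a representative and subtracting $\mathrm{div}(\phi\circ p)$, where $\phi$ satisfies $p_*(\text{representative})=\mathrm{div}(\phi)$, gives an actual divisor $D$ supported on $\mathrm{Exc}(p)$, which the negativity lemma then shows is effective; and for an effective Cartier divisor $D$ and an effective cycle $A$ none of whose components lies in $\mathrm{Supp}(D)$, the class $D\cdot A$ is indeed effective. So you must additionally represent $a=g^*(\omega_Y^p)$ by an effective cycle with no component inside $\mathrm{Exc}(p)$; this is true for a generic linear section by a dimension count in the style of Lemma \ref{LemmaGoodPullbackVariety} (keeping track of the fibre dimensions of $g$ restricted to $\mathrm{Exc}(p)$), but it is a genericity argument you need to supply rather than a formal consequence of effectivity of the two factors.

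With that repair the proof is correct, and the comparison with the paper is instructive. The paper never isolates the class $B$: it chooses generic linear spaces with $H^{p+1}=H\cap H^p$, shows that the varieties $f^*(H_Y)$ and $f^*(H_Y^p)$ intersect properly in $X$, identifies their intersection with $f^*(H_Y^{p+1})$ over the locus where $p$ is an isomorphism, and concludes that the difference is an effective cycle supported over the non-isomorphism locus; this uses only the genericity machinery already built in Lemmas \ref{LemmaGoodPullbackVariety} and \ref{LemmaDegreeOfPullback}. Your route instead imports the negativity lemma for proper birational morphisms, which is not proved in the paper and is not in the cited reference \cite{kollar} (it is in Koll\'ar--Mori, and its standard proof itself reduces to the Hodge index theorem on surfaces). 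Your verification of its hypotheses---$p_*B=0$ and $-B$ being $p$-nef because $\ell=g^*(\omega_Y)$ is nef---is correct, and your closing observation that a naive induction over single blowups fails because $\pi_*$ of a nef class need not stay nef is a fair diagnosis of why some such global input is required on your route.
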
  
\begin{proof}
Let $Z\subset X$ be a proper subvariety containing $p(g^{-1}(\mathcal{C}_g))$ so that $p:\Gamma -p^{-1}(Z)\rightarrow X-Z$ is an isomorphism. Then the restriction map $$p_0:\Gamma -g^{-1}(gp^{-1}(Z))\rightarrow X-p(g^{-1}(gp^{-1}(Z)))$$ is also an isomorphism, and the restriction map $$g_0:\Gamma -g^{-1}(gp^{-1}(Z))\rightarrow Y- gp^{-1}(Z)$$ has fibers of the correct dimension $dim(X)-dim(Y)$. 

Choose $H,H^p,H^{p+1}\subset \mathbb{P}^N_K$ be linear subspaces of codimension $1$, $p$ and $p+1$ such that $H^{p+1}=H\cap H^p$. We can find an automorphism $\tau \in \mathbb{P}^N_K$, so that $\tau (H),\tau (H^p), \tau (H^{p+1})$ intersects properly $Y$ and all irreducible components of $gp^{-1}(Z)$ and of  $V_l$ for all $l>dim (X)-dim(Y)$. For convenience, we write $H,H^p$ and $H^{p+1}$ for $\tau (H),\tau (H^p), \tau (H^{p+1})$, and $H_Y,H_Y^p$ and $H_Y^{p+1}$ for their intersection with $Y$. Then all the varieties $g^{-1}(H_Y),g^{-1}(H_Y^p)$ and $g^{-1}(H_Y^{p+1})$ have the correct dimensions, and have no components in $g^{-1}(gp^{-1}(Z))$. Hence the pullbacks $f^*(H_Y), f^*(H_Y^p)$ and $f^*(H_Y^{p+1})$ are well-defined as varieties in $X$ and has no components on $p(g^{-1}(gp^{-1}(Z)))$.

We next observe that the two varieties $f^*(H_Y)$ and $f^*(H_Y^p)$ intersect properly. Since $f^*(H_Y)$ is a hypersurface, it suffices to show that any component of $f^*(H_Y)\cap f^*(H_Y^p)$ has codimension $p+1$. Since $f^*(H_Y^p)$ has no component on $p(g^{-1}(gp^{-1}(Z)))$, the codimension of $f^*(H_Y)\cap f^*(H_Y^p)\cap p(g^{-1}(gp^{-1}(Z)))$ is at least $p+1$. It remains to show that $f^*(H_Y)\cap f^*(H_Y^p)\cap (X-p(g^{-1}(gp^{-1}(Z))))$ has codimension $p+1$. Since $p_0$ is an isomorphism, the codimension of the latter equals that of 
$$g^{-1}(H_Y)\cap g^{-1}(H_Y^p)\cap (\Gamma - g^{-1}(gp^{-1}(Z)))=g^{-1}(H_Y\cap H_Y^{p})\cap (\Gamma - g^{-1}(gp^{-1}(Z)))$$ 
which is $p+1$.

Therefore $f^*(H_Y).f^*(H_Y^{p+1})$ is well-defined as a variety of $X$, and on $X-p(g^{-1}(gp^{-1}(Z)))$ it equals
\begin{eqnarray*}
(p_0)*(g_0^*(H_Y).g_0^*(H_Y^p))=(p_0)_*(g_0^*(H_Y^{p+1}))=p_*g^*(H_Y^{p+1}).
\end{eqnarray*}
Since the latter has no component on $p(g^{-1}(gp^{-1}(Z)))$, it follows that $f^*(H_Y).f^*(H_Y^p)\geq f^*(H_Y^{p+1})$. From this inequality, we obtain the desired inequality in $A^{p+1}(X)$
\begin{eqnarray*}
f^*(\omega _Y^p).f^*(\omega _Y)\geq f^*(\omega _Y^{p+1}).
\end{eqnarray*} 
\end{proof}

Finally, we estimate the degree of a strict transform of a cycle. Define $$g_0=g|_{\Gamma -g^{-1}(\mathcal{C}_g)}: \Gamma -g^{-1}(\mathcal{C}_g)\rightarrow Y-\mathcal{C}_g.$$ Then $g_0$ is a proper morphism, and for any $y\in Y-\mathcal{C}_g$, $g_0^{-1}(y)$ has the correct dimension $dim(X)-dim(Y)$. Let $W\subset Y$ be a codimension $p$ subvariety. The inverse image $g_0^{-1}(W)=g^{-1}(W)\cap (\Gamma -g^{-1}(\mathcal{C}_g))\subset \Gamma -g^{-1}(\mathcal{C}_g)$ is a closed subvariety of codimension $p$ of $\Gamma -g^{-1}(\mathcal{C}_g)$, hence its closure $cl(g_0^{-1}(W))\subset \Gamma$ is a subvariety of codimension $p$, and we define $f^{o}(W)=p_*cl(g_0^{-1}(W))$. Note that a strict transform depends on the choice of a resolution of singularities $\Gamma$ of the graph $\Gamma _f$. (We can also define a strict transform more intrinsically using the graph $\Gamma _f$ directly, as in \cite{dinh-sibony3}.)

\begin{lemma}
Let $W\subset Y$ be a codimension $p$ subvariety. Then $f^o(W)$ is an effective cycle, and in $A^p(X)$
\begin{eqnarray*} 
f^{o}(W)\leq Cdeg(W)f^*(\omega _Y^p),
\end{eqnarray*}
where $C>0$ is a constant independent of the the variety $W$, the manifold $X$  and the map $f$. 
\label{LemmaDegreeOfStrictTransform}\end{lemma}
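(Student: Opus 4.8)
The plan is to bound the strict transform by the full pullback and then quote Lemma~\ref{LemmaDegreeOfPullback}(b). First I would reduce the whole statement to the single comparison
\[
f^{o}(W)\leq f^{*}(W)\qquad\text{in }A^{p}(X),
\]
i.e.\ to the assertion that the total transform dominates the strict transform. Granting this, Lemma~\ref{LemmaDegreeOfPullback}(b) writes $f^{*}(W)=\beta_{1}-\beta_{2}$ with $\beta_{1},\beta_{2}$ effective and $\beta_{1},\beta_{2}\leq C\deg(W)f^{*}(\omega_{Y}^{p})$; since $\beta_{2}$ is effective, $f^{*}(W)=\beta_{1}-\beta_{2}\leq\beta_{1}\leq C\deg(W)f^{*}(\omega_{Y}^{p})$, and chaining with the displayed inequality gives exactly $f^{o}(W)\leq C\deg(W)f^{*}(\omega_{Y}^{p})$. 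The effectivity of $f^{o}(W)$ is immediate, since $cl(g_{0}^{-1}(W))$ is a subvariety and $p_{*}$ preserves effective cycles. The degenerate case $W\subseteq\mathcal{C}_{g}$ can also be dispatched at once: then $g_{0}^{-1}(W)=\emptyset$, so $f^{o}(W)=0$ and there is nothing to prove; hence I may assume $W\not\subseteq\mathcal{C}_{g}$.

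It then remains to prove $f^{o}(W)\leq f^{*}(W)$. Since $f^{o}(W)=p_{*}\,cl(g_{0}^{-1}(W))$, $f^{*}(W)=p_{*}\,g^{*}(W)$, and $p_{*}$ respects the order $\leq$, it suffices to establish
\[
cl(g_{0}^{-1}(W))\leq g^{*}(W)\qquad\text{in }A^{p}(\Gamma).
\]
The key geometric input is that $g$ is flat over the good locus $U:=Y-\mathcal{C}_{g}$. Indeed, $g^{-1}(U)$ and $U$ are smooth and the restriction $g_{0}:g^{-1}(U)\to U$ has all fibres of the correct dimension $\dim X-\dim Y$, so by miracle flatness $g_{0}$ is flat. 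Consequently the flat pullback $g_{0}^{*}$ agrees with the strict transform over $U$: the restriction of $cl(g_{0}^{-1}(W))$ to $g^{-1}(U)$ equals $g_{0}^{*}(W|_{U})$, which is also the restriction of $g^{*}(W)$. Thus $g^{*}(W)-cl(g_{0}^{-1}(W))$ restricts to $0$ on $g^{-1}(U)$, and by the localization sequence for Chow groups it is represented by a cycle supported on $g^{-1}(\mathcal{C}_{g})$.

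The hard part, and the step I expect to be the main obstacle, is to show that this difference, the \emph{excess} of the total transform over the strict transform along the critical image $\mathcal{C}_{g}$, is effective. One cannot simply move $W$ by rational equivalence to a cycle meeting the $V_{l}$ properly, because the strict transform $cl(g_{0}^{-1}(W))$ is not a rational-equivalence invariant; what flatness over $U$ buys is precisely that this strict transform is pinned down unambiguously on the dense open set $g^{-1}(U)$. The plan is therefore to produce an \emph{effective} representative of the class $g^{*}(W)$ whose restriction to $g^{-1}(U)$ is exactly $cl(g_{0}^{-1}(W))|_{U}$ and whose remaining components lie over $\mathcal{C}_{g}$ with nonnegative multiplicities. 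Here I would use that $\Gamma$ is built from blowups along smooth centres, so that the complementary components over $\mathcal{C}_{g}$ arise as excess-intersection contributions governed by Segre classes of the (effective) exceptional divisors, which are effective. Reconciling this effective excess with the strict transform computed for the original $W$ is the delicate point; combining the flatness of $g$ over $U$ with the positivity of these exceptional contributions is what should yield $cl(g_{0}^{-1}(W))\leq g^{*}(W)$, and hence, via the first paragraph, the lemma.
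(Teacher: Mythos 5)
Your reduction to the single inequality $f^o(W)\leq f^*(W)$ is exactly the step the paper is structured to avoid, and the justification you sketch for it does not hold up. You correctly isolate the problem: the difference $g^*(W)-cl(g_0^{-1}(W))$ is supported on $g^{-1}(\mathcal{C}_g)$, and one must show it is effective. But your proposed mechanism --- that the excess contributions over $\mathcal{C}_g$ are ``governed by Segre classes of the (effective) exceptional divisors, which are effective'' --- rests on a false premise: Segre classes alternate in sign ($s(E,\Gamma)=[E]-E^2+E^3-\cdots$), and the excess-intersection contributions in Fulton's formula, computed from Segre classes of normal cones, are not effective in general. So the crucial comparison $cl(g_0^{-1}(W))\leq g^*(W)$ for an arbitrary $W$ is left unproved. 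The paper only ever uses this comparison in the one case where it is immediate, namely for a cycle $W_e$ meeting every component of every $V_l$ properly, so that $g^*(W_e)$ is represented by the honest variety $g^{-1}(W_e)$ and the strict transform is a subcycle of it (this is the final step $g^o(W_e)\leq g^*(W_e)\leq C\deg(W)g^*(\omega_Y^p)$, where the last inequality is Lemma \ref{LemmaDegreeOfPullback} b), as you intended).

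For the remaining pieces the paper argues quite differently. Roberts's form of Chow's moving lemma writes $W$, as an identity of cycles, as $\sum_{i=1}^e(-1)^{i-1}\iota^*(C_{L_i}(W_{i-1}))+(-1)^eW_e$ with all degrees bounded by $C\deg(W)$; since $g^o$ is additive and each $g^o(\cdot)$ is effective, the alternating signs are harmless (drop the negative terms) and it suffices to bound each term. Each cone section $S=C_{L_i}(W_{i-1})$, which meets $Y$ properly but perhaps not the $V_l$, is then handled by a specialization argument: a rational curve of automorphisms $\tau(t)$ of $\mathbb{P}^N_K$ produces a family $\mathcal{S}$ over $\mathbb{P}^1$ whose generic member pulls back properly; the closure $G^o(\mathcal{S})$ of the inverse image over the good locus is an effective family whose fibre at $t=0$ contains $g^o(\iota^*(S))$ as a subcycle while being rationally equivalent to $g^*(\mathcal{S}_t)=\deg(S)\,g^*(\omega_Y^p)$ for generic $t$. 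This yields $g^o(\iota^*(S))\leq\deg(S)g^*(\omega_Y^p)$ directly, never comparing the strict transform of $S$ to its total transform. Your own remark that the strict transform is not a rational-equivalence invariant is precisely why the paper moves $S$ inside an explicit family and bounds the strict transform by the whole special fibre, rather than replacing $W$ by a rationally equivalent cycle. Unless you can actually establish $f^o(W)\leq f^*(W)$ in general --- which your Segre-class heuristic does not do --- your argument has a genuine gap at its central step.
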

\begin{proof}
That $f^0(W)$ is an effective cycle follows from the definition. It suffices to prove the lemma for the morphism $g:\Gamma \rightarrow Y$. By the proof of Chow's moving lemma, we can decompose $W$ as follows
\begin{eqnarray*}
W=\sum _{i=1}^e(-1)^{i-1}\iota ^*(C_{L_i}(W_{i-1}))+(-1)^eW_e,
\end{eqnarray*} 
where the variety $W_e$ intersects properly all irreducible components of $V_l$ for all $l>dim(X)-dim(Y)$, and $C_{i}(W_{i-1})\subset \mathbb{P}^N_K$ are subvarieties of codimension $p$ intersecting $Y$ properly (but may not intersect properly the irreducible components of $V_l$). Moreover, we have the following bound on the degrees
\begin{equation}
deg(W_e),deg(C_{L_i}(W_{i-1}))\leq Cdeg(W),
\label{Equation0}\end{equation}
for all $i$, where $C>0$ is independent of $W$, $X$ and $f$. 

By the definition of $g^0$ we have
\begin{equation}
g^o(W)= \sum _{i=1}^e(-1)^{i-1}g^o(\iota ^*(C_{L_i}(W_{i-1})))+(-1)^eg^o(W_e).
\label{Equation1}\end{equation}

Note that $e\leq dim(Y)$. We now estimate each term on the RHS of (\ref{Equation1}). Let $S\subset \mathbb{P}^N_K$ be a subvariety of codimension $p$ intersecting $Y$ properly (but may not intersecting properly the components of $V_l$). We first show that for any such $S$
\begin{equation}
g^o(\iota ^*(S))\leq deg(S)g^*(\omega _Y^p),
\label{Equation2}\end{equation}
in $A^p(\Gamma )$. 

We can find a curve of automorphisms $\tau (t)\in Aut(\mathbb{P}^N_K)$ for $t\in \mathbb{P}^1_K$ such that for a dense Zariski open dense subset  $U\subset \mathbb{P}^1$, $\tau (t)S$ intersects properly $Y$ and all the irreducible components of $V_l$ (for $l>dim (X)-dim(Y)$)  for all $t\in U$. Let $\mathcal{S}\subset Y\times \mathbb{P}^1$ be the corresponding variety, hence for $t\in U\subset \mathbb{P}^1$, $\mathcal{S}_t=\iota ^*(\tau (t)S)\subset Y$. Since $S$ intersects $Y$ properly, we have $\mathcal{S}_0=\iota ^*(S)$. By the choice of $\mathcal{S}$, for any $t\in U$ the pullback $g^*(\mathcal{S}_t)$ is well-defined as a subvariety of $\Gamma$. 

We consider the induced map $G:\Gamma \times \mathbb{P}^1\rightarrow Y\times \mathbb{P}^1$ given by the formula 
\begin{eqnarray*}
G(z,t)=(g(z),t).
\end{eqnarray*}
We define by $G_0$ the restriction map $G_0:\Gamma \times U\rightarrow Y\times U$. By the choice of the variety $\mathcal{S}$, the inverse image $$G_0^{-1}(\mathcal{S})=G^{-1}(\mathcal{S})\cap  (\Gamma \times U)\subset \Gamma \times U$$ is a closed subvariety of codimension $p$, hence its closure $G^o(\mathcal{S})\subset \Gamma \times \mathbb{P}^1$ is a subvariety of codimension $p$. Moreover, for all $t\in U$ we have
\begin{eqnarray*}
G^o(\mathcal{S})_t=g^*(\mathcal{S}_t).
\end{eqnarray*}
Since the map $g_0:\Gamma -g^{-1}(\mathcal{C}_g)\rightarrow Y-\mathcal{C}_g$ has all fibers of the correct dimension $dim(X)-dim(Y)$, it follows that $$G^o(\mathcal{S})_0\cap (\Gamma -g^{-1}(\mathcal{C}_g))=g_0^{-1}(\iota ^*(S)).$$ 
In fact, let $G_1$ be the restriction of $G$ to $(\Gamma -\mathcal{C}_g)\times \mathbb{P}^1$. Then $$G_1^{-1}(\mathcal{S} )=G^{-1}(\mathcal{S})\cap [(\Gamma -\mathcal{C}_g)\times \mathbb{P}^1]\subset (\Gamma -\mathcal{C}_g)\times \mathbb{P}^1$$
is a closed subvariety of codimension $p$. Hence its closure, denoted by $\widetilde{G}^o(\mathcal{S})\subset \Gamma \times \mathbb{P}^1$ is a subvariety of codimension $p$. For $t\in U$, we have $\widetilde{G}^o(\mathcal{S})_t=g^*(\mathcal{S}_t)=G^o(\mathcal{S}_t)$, because on the one hand $\widetilde{G}^o(\mathcal{S})_t\subset G^{-1}(\mathcal{S})_t=g^*(\mathcal{S}_t)$, and on the other hand $g^*(\mathcal{S}_t)$ has no component on $g^{-1}(\mathcal{C}_g)$ and $\widetilde{G}^o(\mathcal{S})_t\cap (\Gamma -g^{-1}(\mathcal{C}_g))=g_0^{-1}(\mathcal{S}_t)$. Therefore $\widetilde{G}^o(\mathcal{S})=G^o(\mathcal{S})$ as varieties on $\Gamma \times \mathbb{P}^1$. In particular  
\begin{eqnarray*}
G^o(\mathcal{S})_0\cap (\Gamma -g^{-1}(\mathcal{C}_g))=\widetilde{G}^o(\mathcal{S})_0\cap (\Gamma -g^{-1}(\mathcal{C}_g))=g_0^{-1}(\iota ^*(S)),
\end{eqnarray*}
as claimed.

Hence 
\begin{eqnarray*}
g^o(\iota ^*(S))\leq G^o(\mathcal{S})_0
\end{eqnarray*}
as varieties on $\Gamma$. Since $G^o(\mathcal{S})_0$ is rationally equivalent to $G^o(\mathcal{S})_t$ for any $t$ in $U$, it follows that for all such $t$ we have
\begin{eqnarray*}
g^o(\iota ^*(S))\leq G^o(\mathcal{S})_t=g^*(\mathcal{S}_t)=deg(S)g^*(\omega _Y^p),
\end{eqnarray*}
in $A^p(\Gamma )$. Hence (\ref{Equation2}) is proved. 

Now we continue the proof of the lemma. By (\ref{Equation2}) and the bound on degrees (\ref{Equation0}), for all $i=1,\ldots ,e$
\begin{eqnarray*}
g^o(\iota ^*(C_{L_i}(W_{i-1})))\leq Cdeg(W)g^*(\omega _Y^p),
\end{eqnarray*}
in $A^p(\Gamma )$ where $C>0$ is independent of $W$, $X$ and $f$. 

It remains to estimate $g^o(W_e)$. By the choice of $W_e$, the pullback $g^*(W_e)$ is well-defined as a subvariety of $\Gamma$, hence by b) of Lemma \ref{LemmaDegreeOfPullback} and the bound on degrees (\ref{Equation0}) we have
\begin{eqnarray*} 
g^o(W_e)\leq g^*(W_e)\leq Cdeg(W)g^*(\omega _Y^p),
\end{eqnarray*}
in $A^p(\Gamma )$, where $C>0$ is independent of $W$, $X$ and $f$. Thus the proof of the lemma is completed. 
\end{proof} 
  
\subsubsection{Dynamical degrees and some of their basic properties}
We define here dynamical degrees and prove some of their basic properties. When $K$ is the field of complex numbers, all of the results in this subsubsection were known.  Note that in this case (i.e. when $K=\mathbb{C}$), our approach here using Chow's moving lemma is different from the previous ones using "regularization of currents" (see \cite{russakovskii-shiffman} for the case $X=\mathbb{P}^N_{\mathbb{C}}$ the complex projective space and see \cite{dinh-sibony1}\cite{dinh-sibony10} for the case $X$ is a general compact K\"ahler manifold; see also \cite{guedj4}\cite{guedj5} and \cite{friedland}). Let $X$ be a projective manifold with a given embedding $\iota :X\subset \mathbb{P}^N_K$. We let $H\subset \mathbb{P}^N$ be a linear hyperplane, and let $\omega _X=H|_{X}$.

\begin{lemma}
Let $Y,Z$ be projective manifolds, and let $f:Y\rightarrow X,~g:Z\rightarrow Y$ be dominant rational maps. We fix an embedding $Y\subset \mathbb{P}^M_K$ and let $\omega _Y$ be the pullback to $Y$ of a generic hyperplane in $\mathbb{P}^M_K$. Then in $A^p(Z)$ 
\begin{eqnarray*} 
(f\circ g)^*(\omega _X^p) \leq Cdeg(f^*(\omega _X^p))g^*(\omega _Y^p),
\end{eqnarray*}
where $C>0$ is independent of $f$ and $g$.
\label{LemmaDegreeOfCompositionMaps}\end{lemma}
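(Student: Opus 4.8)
The plan is to reduce the statement to a single generic linear section representing $\omega_X^p$, to compare the pullback by the composite $f\circ g$ with the strict transform by $g$ of the pullback by $f$, and then to apply the degree estimate for strict transforms (Lemma \ref{LemmaDegreeOfStrictTransform}). First I would fix the embeddings $X\subset \mathbb{P}^N_K$ and $Y\subset \mathbb{P}^M_K$ and record, via Lemma \ref{LemmaDegreeOfPullback} a), that the class $f^*(\omega_X^p)\in A^p(Y)$ is effective; write it as $f^*(\omega_X^p)=\sum_i m_i[W_i]$ with $m_i>0$ and $W_i\subset Y$ irreducible of codimension $p$, so that $deg(f^*(\omega_X^p))=\sum_i m_i\,deg(W_i)$. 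Applying Lemma \ref{LemmaDegreeOfPullback} a) to the finitely many proper subvarieties that matter here (the critical images $\mathcal{C}_f$, $\mathcal{C}_g$, $\mathcal{C}_{f\circ g}$, the $W_i$, and their intersections with these critical loci), I choose a generic codimension-$p$ linear subspace $H^p\subset \mathbb{P}^N_K$ with $A:=H^p\cap X$ a generic linear section representing $\omega_X^p$ in $A^p(X)$, and such that both $(f\circ g)^*(A)$ and $f^*(A)$ are well-defined, effective, and coincide with the respective strict transforms. In particular $(f\circ g)^*(\omega_X^p)=(f\circ g)^o(A)$ and $f^o(A)=f^*(\omega_X^p)=\sum_i m_i[W_i]$.

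The heart of the argument is the comparison
\[
(f\circ g)^o(A)\leq \sum_i m_i\, g^o(W_i)\quad\text{in } A^p(Z).
\]
To establish this I would work over the open dense locus on which $g$ is a morphism mapping into the good locus of $f$; there one has the set-theoretic identity $(f\circ g)^{-1}(A)=g^{-1}(f^{-1}(A))=g^{-1}\big(\bigcup_i W_i\big)$, and the multiplicity of $(f\circ g)^o(A)$ along the generic point of a component $V$ of $g^o(W_i)$ equals the product of the multiplicity $m_i$ of $W_i$ in $f^o(A)$ with the local multiplicity of $g^{-1}(W_i)$ along $V$. Passing to closures, every component of the strict transform $(f\circ g)^o(A)$ already occurs, with at most the same multiplicity, inside one of the $g^o(W_i)$, which yields the displayed inequality (the strict transform of the composite can only discard, never create, components lying over the indeterminacy loci).

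Finally I would apply Lemma \ref{LemmaDegreeOfStrictTransform} to the dominant rational map $g:Z\rightarrow Y$ and each $W_i$, obtaining $g^o(W_i)\leq C\,deg(W_i)\,g^*(\omega_Y^p)$ with $C$ the universal constant of that lemma, which is independent of $W_i$, $Z$ and $g$. Summing over $i$ and using $\sum_i m_i\,deg(W_i)=deg(f^*(\omega_X^p))$ then gives
\[
(f\circ g)^*(\omega_X^p)\leq C\Big(\sum_i m_i\,deg(W_i)\Big)g^*(\omega_Y^p)=C\,deg(f^*(\omega_X^p))\,g^*(\omega_Y^p),
\]
with $C$ independent of $f$ and $g$, as claimed.

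The step I expect to be the main obstacle is the comparison $(f\circ g)^o(A)\leq \sum_i m_i\, g^o(W_i)$: one must choose $H^p$ generic enough that $A$ avoids all the critical images of $f$, $g$ and $f\circ g$ and that each $W_i$ meets $\mathcal{C}_g$ properly, and then keep track of multiplicities through the two successive preimages. The required genericity is furnished by repeated application of Lemma \ref{LemmaDegreeOfPullback} a) to finitely many proper subvarieties, while the multiplicity bookkeeping is purely local at the generic point of each component and only uses that the strict transform of a composite is dominated by the iterated strict transform.
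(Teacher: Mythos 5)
Your proposal is correct and follows essentially the same route as the paper: reduce to a generic linear section $A=\iota^*(H^p)$ representing $\omega_X^p$ chosen via Lemma \ref{LemmaDegreeOfPullback} a), observe that the strict transform of the composite is dominated by the iterated strict transform ($(f\circ g)^o(A)\leq g^o(f^*(A))$, which is your componentwise inequality $\leq\sum_i m_i\,g^o(W_i)$), and conclude with Lemma \ref{LemmaDegreeOfStrictTransform} applied to $g$. The only cosmetic difference is that you make the decomposition of $f^*(\omega_X^p)$ into irreducible components and the attendant multiplicity bookkeeping explicit, where the paper phrases the same step as an identity of set-theoretic preimages over the good locus.
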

\begin{proof}
We can find proper subvarieties $V_X\subset X,V_Y\subset Y$ and $V_Z\subset Z$ so that the maps $f_0:Y-V_Y\rightarrow X-V_X$ and $g_0:Z-V_Z\rightarrow Y-V_Y$ are regular, proper and have all fibers of the correct dimensions (we can do this by choosing resolutions of singularities for the graphs of $f$ and $g$, and then proceed similarly to the the proof of Lemma \ref{LemmaBoundForPullbackKahlerForms}). Define by $(f\circ g)_0$ the restriction of $f\circ g$ to $Z-V_Z$. Then $(f\circ g)_0=f_0\circ g_0:Z-V_Z\rightarrow X-V_X$ and has all fibers of the correct dimension. We define the strict transforms $f^0$, $g^0$ and $(f\circ g)^0$ using these restriction maps $f_0,g_0$ and $(f\circ g)_0$. 

By Lemma \ref{LemmaDegreeOfPullback} a), we can find a linear subspace $H^p\subset \mathbb{P}^N_K$ so that $H^p$ intersects $X$ properly, $(f\circ g)^*(\iota ^*(H^p))$ is well-defined as a variety and has no component on $V_Z$, and $f^*(\iota ^*(H^p))$ is well-defined as a variety. Then
\begin{eqnarray*}
(f\circ g)^*(\iota ^*(H^p))=(f\circ g)^o(\iota ^*(H^p))
\end{eqnarray*} 
is the closure of $$(f\circ g)_0^{-1}(\iota ^*(H^p))=(f_0\circ g_0)^{-1}(\iota ^*(H^p))=(g_0)^{-1}f_0^{-1}(\iota ^*(H^p)).$$
Therefore $$(f\circ g)^*(\iota ^*(H^p))=g^of^o(\iota ^*(H^p))\leq g^of^*(\iota ^*(H^p)).$$
as subvarieties of $Z$. By Lemma \ref{LemmaDegreeOfStrictTransform}, we have the desired result. 
\end{proof}

Let $f:X\rightarrow X$ be a dominant rational map. Fix a number $p=0,\ldots ,k=dim(X)$. Apply Lemma \ref{LemmaDegreeOfCompositionMaps} to $Y=Z=X$ and the maps $f^n,f^m$, we see that the sequence $n\mapsto deg((f^n)^*(\omega _X^p))$ is sub-multiplicative. Therefore, we can define the $p$-th dynamical degree as follows
\begin{eqnarray*}
\lambda _p(f)=\lim _{n\rightarrow\infty}(deg((f^n)^*(\omega _X^p)))^{1/n}=\inf _{n\in \mathbb{N}}(deg((f^n)^*(\omega _X^p)))^{1/n}.
\end{eqnarray*}
We now relate $\lambda _p(f)$ to the spectral radii $r_p(f^n)$ of the linear maps $(f^n)^*:N_{\mathbb{R}}^p(X)\rightarrow N_{\mathbb{R}}^p(X)$.

\begin{lemma}

a) There is a constant $C>0$ independent of $f$ so that
\begin{eqnarray*}
||f^*(v)||_1\leq C||v||_1||f^*(\omega _X^p)||_1,
\end{eqnarray*}
for all $v\in N_{\mathbb{R}}^p(X)$. Here the norm $||\cdot ||_1$ is defined in (\ref{LabelDefinitionNorm}). Therefore if we denote by $f_p^*$ the linear map $f^*:N^p_{\mathbb{R}}(X)\rightarrow N^p_{\mathbb{R}}(X)$, and by $$||A||_1=\sup _{v\in N^p_{\mathbb{R}}(X), ||v||_1=1}||A(v)||_1$$ the norm of a linear map $A:N^p_{\mathbb{R}}(X)\rightarrow N^p_{\mathbb{R}}(X)$ then
\begin{eqnarray*}
\frac{1}{deg(\omega _X^p)}||f^*(\omega _X^p)||_1\leq ||f_p^*||_1\leq C||f^*(\omega _X^p)||_1,
\end{eqnarray*}
here $C$ is the same constant as in the previous inequality.

b) There is a constant $C>0$ independent of $f$ so that $r_p(f)\leq C||f^*(\omega _X^p)||_1.$

c) We have $\lambda _p(f)=\lim _{n\rightarrow\infty}||(f^n)_p^*||_1^{1/n}\geq\limsup _{n\rightarrow\infty}(r_p(f^n))^{1/n}.$
\label{LemmaSpectralRadius}\end{lemma}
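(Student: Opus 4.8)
The plan is to prove part a) first, since parts b) and c) then follow almost formally. For part a), I would reduce the desired estimate to a single irreducible subvariety. Given $v\in N^p_{\mathbb{R}}(X)$, choose a decomposition $v=v_1-v_2$ into effective cycles with $\deg(v_1)+\deg(v_2)$ arbitrarily close to $||v||_1$; by linearity of $f^*$ and the triangle inequality for $||\cdot||_1$, it suffices to bound $||f^*(W)||_1$ for an irreducible subvariety $W$ of codimension $p$. Here Lemma \ref{LemmaDegreeOfPullback} b) supplies exactly the required input: it writes $f^*(W)=\beta_1-\beta_2$ with $\beta_1,\beta_2$ effective and $\beta_i\leq C\deg(W)f^*(\omega_X^p)$, whence $\deg(\beta_i)\leq C\deg(W)\deg(f^*(\omega_X^p))$. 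Since $f^*(\omega_X^p)$ is effective by Lemma \ref{LemmaDegreeOfPullback} a), we have $\deg(f^*(\omega_X^p))=||f^*(\omega_X^p)||_1$, so that $||f^*(W)||_1\leq 2C\deg(W)||f^*(\omega_X^p)||_1$. Writing $v_1=\sum_j m_j[W_j]$ with $m_j\geq 0$ (so that $\deg(v_1)=\sum_j m_j\deg(W_j)$), summing, doing the same for $v_2$, taking the infimum over decompositions, and absorbing the factor $2$ into the constant yields $||f^*(v)||_1\leq C||v||_1||f^*(\omega_X^p)||_1$, with $C$ independent of $f$ precisely because the constant of Lemma \ref{LemmaDegreeOfPullback} b) is.

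The two operator-norm inequalities of a) are then immediate. The upper bound $||f_p^*||_1\leq C||f^*(\omega_X^p)||_1$ is the estimate above restricted to $||v||_1=1$. For the lower bound, testing $f_p^*$ on the single vector $\omega_X^p$ and using that $\omega_X^p$ is effective, so $||\omega_X^p||_1=\deg(\omega_X^p)$, gives $||f_p^*||_1\geq ||f^*(\omega_X^p)||_1/||\omega_X^p||_1=||f^*(\omega_X^p)||_1/\deg(\omega_X^p)$.

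Part b) is then the standard fact that the spectral radius of a linear operator is dominated by any submultiplicative operator norm, so $r_p(f)\leq ||f_p^*||_1\leq C||f^*(\omega_X^p)||_1$ by a). For part c) I would apply the two-sided estimate of a) to the iterate $f^n$ and use that $(f^n)^*(\omega_X^p)$ is effective, so $||(f^n)^*(\omega_X^p)||_1=\deg((f^n)^*(\omega_X^p))$. This sandwiches $||(f^n)_p^*||_1$ between $\deg((f^n)^*(\omega_X^p))/\deg(\omega_X^p)$ and $C\deg((f^n)^*(\omega_X^p))$; taking $n$-th roots, the prefactors $\deg(\omega_X^p)^{-1/n}$ and $C^{1/n}$ tend to $1$, so $\lim_{n}||(f^n)_p^*||_1^{1/n}$ exists and equals $\lim_{n}\deg((f^n)^*(\omega_X^p))^{1/n}=\lambda_p(f)$ (the latter limit existing by the submultiplicativity recorded before the statement). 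Finally, applying b) to $f^n$, whose constant is uniform in the map, gives $r_p(f^n)\leq C\deg((f^n)^*(\omega_X^p))$, hence $(r_p(f^n))^{1/n}\leq C^{1/n}\deg((f^n)^*(\omega_X^p))^{1/n}$, and $\limsup_{n}(r_p(f^n))^{1/n}\leq\lambda_p(f)$.

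The bulk of the genuine work sits inside Lemma \ref{LemmaDegreeOfPullback}, which is already available; granting it, the present lemma is essentially bookkeeping together with the elementary spectral-radius bound and a squeeze. The one point I regard as the main obstacle to get right is the \emph{uniformity} of all the constants in the map $f$: this is exactly what lets me pass from the single-step estimate to the iterates $f^n$ without the constant degrading, and thereby identify the limit with $\lambda_p(f)$ rather than with a larger quantity. As long as the constant in Lemma \ref{LemmaDegreeOfPullback} b) is genuinely independent of $f$ and $X$, the argument closes.
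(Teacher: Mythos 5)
Your proof is correct and follows essentially the same route as the paper: Lemma \ref{LemmaDegreeOfPullback} carries all the real content, and the rest is the bookkeeping you describe, with the uniformity of the constant in $f$ playing exactly the role you flag. The only cosmetic differences are that the paper proves a) by expanding $v$ in a fixed basis of varieties and invoking equivalence of norms on the finite-dimensional space $N^p_{\mathbb{R}}(X)$, where you work directly with near-optimal effective decompositions, and that for b) the paper iterates the single-step estimate and takes $n$-th roots rather than quoting $r_p(f)\leq \|f_p^*\|_1$ outright --- both yield the same conclusion.
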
 
\begin{proof}

a) Let $m=dim_{\mathbb{Z}}N^p(X)$, and we choose varieties $v_1,\ldots ,v_m$ to be a basis for $N^p(X)$. Then $v_1,\ldots ,v_m$ is also a basis for $N_{\mathbb{R}}^p(X)$. We denote by $||\cdot ||_2$ the max norm on $N_{\mathbb{R}}^p(X)$ with respect to the basis $v_1,\ldots ,v_m$, thus for $v=a_1v_1+\ldots a_mv_m$ 
\begin{eqnarray*} 
||v||_2=\max \{|a_1|,\ldots ,|a_m|\}.
\end{eqnarray*}
By Lemma \ref{LemmaDegreeOfPullback}, we can write each $f^*(v_j)$ as a difference $\alpha _j-\beta _j$ where $\alpha _j$ and $\beta _j$ are effective and $deg(\alpha _j),deg(\beta _j)\leq Cdeg(v_j)deg(f^*(\omega _X^p))$. Here $C>0$ is independent of the map $f$. In particular, $||f^*v_j||_1\leq Cdeg(v_j)deg(f^*(\omega _X^p))$ for any $j=1,\ldots ,m$. Therefore
\begin{eqnarray*}
||f^*v||_1&=&||a_1f^*(v_1)+\ldots a_mf^*(v_m)||_1\leq |a_1||f^*(v_1)|||+\ldots |a_m|||f^*(v_m)||_1\\
&\leq&C||v||_2||f^*(\omega _X^p)||_1\leq C'||v||_1||f^*(\omega _X^p)||_1
\end{eqnarray*}
since any norm on $N_{\mathbb{R}}^p(X)$ is equivalent to $||.||_1$. The other inequalities follow easily from definition of $||f_p^*||_1$. Hence a) is proved.
 
b) Iterating a) we obtain
\begin{eqnarray*}
||(f^*)^nv||_1\leq C^{n-1}||v||_1||f^*(\omega _X^p)||_1^n,
\end{eqnarray*}
for all $n\in \mathbb{N}$ and $v\in N_{\mathbb{R}}^p(X)$. Taking supremum on all $v$ with $||v||_1=1$ and then taking the limit of the $n$-th roots when $n\rightarrow \infty$, we obtain
\begin{eqnarray*}
r_p(f)\leq C ||f^*(\omega _X^p)||_1.
\end{eqnarray*}
Here $C>0$ is the same constant as in a).

c) follows from a) and b) and the definition of the dynamical degree $\lambda _p(f)$. 

\end{proof}

Using Lemma \ref{LemmaDegreeOfCompositionMaps}, it is standard (see e.g. \cite{dinh-sibony10}) to prove the following result
\begin{lemma} The dynamical degrees are birational invariants. More precisely, if $X,Y$ are projective manifolds of the same dimension $k$, $f:X\rightarrow X$ and $g:Y\rightarrow Y$ are dominant rational maps, and $\pi :X\rightarrow Y$ is a birational map so that $\pi \circ f=g\circ \pi$, then $\lambda _p(f)=\lambda _p(g)$ for all $p=0,\ldots ,k$. 
\label{LemmaDynamicalDegreeBirationalInvariant}\end{lemma}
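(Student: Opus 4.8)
The plan is to exploit the conjugacy relation together with the submultiplicativity of degrees under composition furnished by Lemma \ref{LemmaDegreeOfCompositionMaps}. First I would observe that the intertwining relation $\pi \circ f = g\circ \pi$ of dominant rational maps iterates: since $\pi$ is birational and the composite of dominant rational maps is again dominant, an easy induction gives $\pi \circ f^n = g^n\circ \pi$, and hence $f^n = \pi^{-1}\circ g^n\circ \pi$ as dominant rational maps, for every $n\in\mathbb{N}$. I would fix once and for all the embeddings of $X$ and $Y$ and the associated hyperplane classes $\omega _X$ and $\omega _Y$; since $X$, $Y$ and $\pi$ are fixed, every constant produced below depends only on these data and not on $n$.

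The heart of the argument is to compare $deg((f^n)^*(\omega _X^p))$ with $deg((g^n)^*(\omega _Y^p))$. Writing $f^n=(\pi^{-1}\circ g^n)\circ \pi$ and applying Lemma \ref{LemmaDegreeOfCompositionMaps} to this outer composition, then applying it a second time to $\pi^{-1}\circ g^n = \pi^{-1}\circ g^n$, I obtain two inequalities of cycles (each understood in the sense that the difference is effective), namely $(f^n)^*(\omega _X^p)\leq C\,deg((\pi^{-1}\circ g^n)^*(\omega _X^p))\,\pi^*(\omega _Y^p)$ in $A^p(X)$ and $(\pi^{-1}\circ g^n)^*(\omega _X^p)\leq C\,deg((\pi^{-1})^*(\omega _X^p))\,(g^n)^*(\omega _Y^p)$ in $A^p(Y)$. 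Taking degrees, which are nonnegative on effective cycles and hence monotone for the relation $\leq$, and combining the two bounds yields $deg((f^n)^*(\omega _X^p))\leq A\,deg((g^n)^*(\omega _Y^p))$, where $A=C^2\,deg(\pi^*(\omega _Y^p))\,deg((\pi^{-1})^*(\omega _X^p))$ is a constant independent of $n$.

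From here the conclusion is routine: taking $n$-th roots and letting $n\to\infty$ in the definition $\lambda _p(f)=\lim _{n\to\infty}(deg((f^n)^*(\omega _X^p)))^{1/n}$ gives $\lambda _p(f)\leq \lambda _p(g)$. The hypotheses are symmetric under interchanging $(f,\pi)$ with $(g,\pi^{-1})$, so the identical argument applied to $g^n=\pi\circ f^n\circ \pi^{-1}$ gives the reverse inequality $\lambda _p(g)\leq \lambda _p(f)$, and therefore $\lambda _p(f)=\lambda _p(g)$ for all $p=0,\ldots,k$.

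I expect the only genuine subtlety, and the reason the statement is merely ``standard'' rather than immediate, to be the failure of functoriality of pullback for rational maps: in general $(\pi^{-1}\circ g^n\circ \pi)^*\neq \pi^*\,(g^n)^*\,(\pi^{-1})^*$ on $A^p$, so one cannot simply conjugate the induced linear maps and read off equality of spectral radii directly. Lemma \ref{LemmaDegreeOfCompositionMaps} is precisely the device that replaces this missing equality by a submultiplicative estimate, and the main point requiring care is that the constant $A$ absorbing $\pi$ and $\pi^{-1}$ must not grow with $n$; this is guaranteed because Lemma \ref{LemmaDegreeOfCompositionMaps} provides a constant independent of the maps to which it is applied.
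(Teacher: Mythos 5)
Your argument is correct and is exactly the standard one the paper has in mind: the paper omits the proof, stating only that it follows from Lemma \ref{LemmaDegreeOfCompositionMaps} in the standard way (citing Dinh--Sibony), and your two applications of that lemma to $f^n=(\pi^{-1}\circ g^n)\circ\pi$, yielding $deg((f^n)^*(\omega_X^p))\leq A\,deg((g^n)^*(\omega_Y^p))$ with $A$ independent of $n$, followed by taking $n$-th roots and invoking symmetry, is precisely that argument. Your closing remark correctly identifies the point of the lemma, namely that it substitutes a uniformly controlled submultiplicative inequality for the failed functoriality $(u\circ v)^*=v^*u^*$ of pullbacks of rational maps.
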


It is also possible to prove the log-concavity of dynamical degrees in the algebraic setting, provided that a mixed Hodge-Riemann theorem is valid for the algebraic setting (for mixed Hodge-Riemann theorem on compact K\"ahler manifolds, see the paper Dinh-Nguyen \cite{dinh-nguyen}). Nevertheless, we have the following direct consequence of Lemma \ref{LemmaBoundForPullbackKahlerForms}
\begin{lemma}
Let $f:X\rightarrow X$ be a rational map. For any $p=0,\ldots ,k-1$
\begin{eqnarray*}
\lambda _1(f)\lambda _p(f)\geq \lambda _{p+1}(f).
\end{eqnarray*}
In particular, $\lambda _1(f)^p\geq \lambda _p(f)$ for any $p=0,\ldots ,k$.
\label{LemmaBoundForDynamicalDegrees}\end{lemma}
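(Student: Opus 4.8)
The plan is to derive the multiplicative inequality $\lambda _1(f)\lambda _p(f)\geq \lambda _{p+1}(f)$ directly from the pointwise inequality of Lemma \ref{LemmaBoundForPullbackKahlerForms} applied to the iterates $f^n$, and then to obtain $\lambda _1(f)^p\geq \lambda _p(f)$ by an immediate induction. First I would apply Lemma \ref{LemmaBoundForPullbackKahlerForms} to the rational map $f^n:X\rightarrow X$ (taking $Y=X$), which gives, in $A^{p+1}(X)$,
$$
(f^n)^*(\omega _X^{p+1})\leq (f^n)^*(\omega _X^p).(f^n)^*(\omega _X).
$$
Both sides are effective by Lemma \ref{LemmaDegreeOfPullback} a), and here $\leq$ means that the difference of the two classes is effective; since the degree map is linear and non-negative on effective cycles, it is monotone for this ordering. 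Taking degrees therefore yields
$$
deg((f^n)^*(\omega _X^{p+1}))\leq deg\big((f^n)^*(\omega _X^p).(f^n)^*(\omega _X)\big).
$$

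Next I would bound the degree of the intersection product on the right-hand side. Writing the effective cycles $(f^n)^*(\omega _X^p)$ and $(f^n)^*(\omega _X)$ as non-negative combinations of their irreducible components and applying Lemma \ref{LemmaDegreeOfIntersections} to each pair of components, together with the linearity and non-negativity of the degree on effective classes, I obtain a constant $C>0$ independent of $n$ with
$$
deg\big((f^n)^*(\omega _X^p).(f^n)^*(\omega _X)\big)\leq C\, deg((f^n)^*(\omega _X^p))\, deg((f^n)^*(\omega _X)).
$$
Combining the last two displays, taking $n$-th roots and letting $n\rightarrow \infty$ (so that $C^{1/n}\rightarrow 1$), the very definition of the dynamical degrees gives $\lambda _{p+1}(f)\leq \lambda _p(f)\lambda _1(f)$, which is the first assertion.

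For the second assertion I would induct on $p$. One has $\lambda _0(f)=1$, because $(f^n)^*$ fixes the fundamental class $\omega _X^0=[X]$ and hence $deg((f^n)^*(\omega _X^0))=deg(X)$ is constant in $n$; thus $\lambda _1(f)^0=1\geq \lambda _0(f)$. Assuming $\lambda _1(f)^p\geq \lambda _p(f)$, the first part yields
$$
\lambda _1(f)^{p+1}=\lambda _1(f)\cdot \lambda _1(f)^p\geq \lambda _1(f)\lambda _p(f)\geq \lambda _{p+1}(f),
$$
completing the induction.

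The only genuinely technical point, and hence the main obstacle, is the degree estimate for the intersection product with a constant $C$ that does not depend on $n$: it is essential that the bound be uniform in the iterate, since otherwise taking $n$-th roots would not eliminate it. This uniformity is exactly what Lemma \ref{LemmaDegreeOfIntersections} supplies (its constant $C$ depends only on $X$ and its embedding), once one checks that the statement there for a pair of irreducible subvarieties extends by bilinearity of the intersection product to arbitrary effective cycles without degrading the bound.
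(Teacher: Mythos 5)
Your argument is correct and follows exactly the route the paper intends: the paper states Lemma \ref{LemmaBoundForDynamicalDegrees} without proof as a ``direct consequence'' of Lemma \ref{LemmaBoundForPullbackKahlerForms}, and your write-up (applying that lemma to the iterates $f^n$, bounding the degree of the intersection product uniformly in $n$ via Lemma \ref{LemmaDegreeOfIntersections}, taking $n$-th roots, and inducting for the second claim) supplies precisely the details being elided. The one point you rightly flag --- uniformity of the constant in $n$ --- is indeed handled by the fact that the constant in Lemma \ref{LemmaDegreeOfIntersections} depends only on $X$ and its embedding.
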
  

\subsubsection{$p$-stability}
Let $X$ be a projective manifold, and $f:X\rightarrow X$ a dominant rational map. Given $p=0,\ldots ,k=dim (X)$, we say that $f$ is $p$-stable if for any $n\in \mathbb{N}$, $(f^n)^*=(f^*)^n$ on $N^{p}_{\mathbb{R}}(X)$. Note that when $K=\mathbb{C}$ and $p=1$ or $p=k-1$ this is the usual definition. In fact, Lefschetz theorem on $(1,1)$ classes and the hard Lefschetz theorem (see e.g. Chapter 0 in \cite{griffiths-harris}) imply that if $X$ is a complex projective manifold then $H^{1,1}(X)$ and $H^{k-1,k-1}(X)$ are generated by algebraic cycles.

\subsection{Analogs of Theorems \ref{TheoremFirstDynamicalDegreeIsSimple} and \ref{TheoremTheCaseNotStable}}   

First, we state the analogs of Lemma \ref{LemmaPullPushFormulaForOneBlowup} and Proposition \ref{LemmaPullPushFormulaForMeromorphicMaps}.

\begin{lemma} Let $X\subset \mathbb{P}^N_K$ be a projective manifold of dimension $k$. Let $\pi :Z\rightarrow X$ be a blowup of $X$ along a smooth  submanifold $W=\pi (E)$ of codimension exactly $2$. Let $E$ be the exceptional divisor and let $L$ be a general fiber of $\pi$. Let $\alpha$ be a vector in $N^1_{\mathbb{C}}(Z)$. 

i) In $A^*(X)$ we have
\begin{eqnarray*}
(\pi )_*(E.E)=-W.
\end{eqnarray*}

ii)  
\begin{eqnarray*}
\pi ^*(\pi )_*(\alpha )=\alpha +(\{\alpha \}.\{L\})E.
\end{eqnarray*}

iii)  
\begin{eqnarray*}
(\pi )_*(\alpha . E)=(\{\alpha \}.\{L\})W.
\end{eqnarray*}

iv) 
\begin{eqnarray*}
(\pi )_*(\alpha ).(\pi )_* (\overline{\alpha })-(\pi )_*(\alpha . \overline{\alpha })=|\{\alpha \}.\{L\}|^2W.
\end{eqnarray*}
\label{LemmaPullPushFormulaForOneBlowupAlgebraicCase}\end{lemma}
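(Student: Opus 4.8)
The plan is to reduce the entire lemma to part i), since once the self-intersection formula $(\pi)_*(E.E)=-W$ is established, parts ii), iii), iv) follow by exactly the same formal manipulations as in the proof of Lemma \ref{LemmaPullPushFormulaForOneBlowup}, with the one decisive improvement that the undetermined constant $c_E$ of that lemma is now pinned down to equal $1$. The structural inputs I would use throughout are the blowup decomposition $N^1_{\mathbb{C}}(Z)=\pi^*N^1_{\mathbb{C}}(X)\oplus \mathbb{C}\,E$ (standard for a blowup along a smooth center, see Fulton \cite{fulton}), together with the projection formula and the elementary facts $\pi_*\pi^*=\mathrm{id}$ on $N^*_{\mathbb{C}}(X)$ (because $\pi$ is birational, so $\pi_*[Z]=[X]$), $\pi_*E=0$ in $A^1(X)$ (because $E$ maps onto $W$, which has smaller dimension), $E.L=-1$, and $\pi_*L=0$ (because $L$ is contracted to a point). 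I write $\alpha.L$ for the intersection number denoted $\{\alpha\}.\{L\}$ in the statement.

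For part i), which I expect to be the heart of the matter, I would invoke the standard description of the exceptional divisor of the blowup along a smooth codimension-$2$ center: $E=\mathbb{P}(N_{W/X})$ is a $\mathbb{P}^1$-bundle $g:E\to W$, a fiber of which is $L$, and the normal bundle of $E$ in $Z$ is $N_{E/Z}=\mathcal{O}_E(-1)$ (see \cite{fulton}). Writing $j:E\hookrightarrow Z$ for the inclusion and $\zeta=c_1(\mathcal{O}_E(1))$, the self-intersection is computed by restriction: $E.E=j_*\bigl(c_1(N_{E/Z})\cap[E]\bigr)=-\,j_*(\zeta\cap[E])$. Pushing forward by $\pi$ and using $\pi\circ j=g$ gives $\pi_*(E.E)=-\,g_*(\zeta\cap[E])$. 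The projective-bundle (Segre class) computation for the rank-$2$ bundle $N_{W/X}$ then yields $g_*(\zeta\cap[E])=s_0(N_{W/X})\cap[W]=[W]$, whence $\pi_*(E.E)=-W$. This exact Segre-class evaluation is precisely the statement $c_E=1$, and it replaces the purely qualitative bound $c_E\ge 0$ obtained in the Kähler case by a regularization argument.

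Granting i), I would finish the remaining parts by linear algebra on $N^1_{\mathbb{C}}(Z)$. Write $\alpha=\pi^*\beta+aE$ with $\beta\in N^1_{\mathbb{C}}(X)$ and $a\in\mathbb{C}$; intersecting with $L$ and using $\pi^*\beta.L=\beta.\pi_*L=0$ and $E.L=-1$ gives $\alpha.L=-a$. For ii), $\pi_*\alpha=\pi_*\pi^*\beta+a\pi_*E=\beta$, so $\pi^*\pi_*\alpha=\pi^*\beta=\alpha-aE=\alpha+(\alpha.L)E$. For iii), the projection formula and i) give $\pi_*(\alpha.E)=\pi_*(\pi^*\beta.E)+a\,\pi_*(E.E)=\beta.\pi_*E-aW=(\alpha.L)W$. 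For iv), I would combine the projection formula with ii): since $\pi_*\alpha.\pi_*\overline{\alpha}=\pi_*\bigl(\pi^*\pi_*\alpha.\overline{\alpha}\bigr)$ and $\pi^*\pi_*\alpha=\alpha+(\alpha.L)E$, we get $\pi_*\alpha.\pi_*\overline{\alpha}=\pi_*(\alpha.\overline{\alpha})+(\alpha.L)\,\pi_*(E.\overline{\alpha})$; applying iii) to $\overline{\alpha}$ and noting that $L$ has a real class, so $\overline{\alpha}.L=\overline{\alpha.L}$, the last term becomes $(\alpha.L)\overline{(\alpha.L)}\,W=|\alpha.L|^2W$, which is the desired identity after transposing.

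The main obstacle, as indicated, is the exact evaluation in part i): the whole content is to force the constant to be exactly $1$, and this rests on the projective-bundle formula $g_*\zeta=[W]$ for $E=\mathbb{P}(N_{W/X})$, where the algebraic intersection theory of \cite{fulton} is genuinely used (in contrast to the transcendental regularization of currents employed in Lemma \ref{LemmaPullPushFormulaForOneBlowup}). Once this exact constant is in hand, parts ii)--iv) are purely formal and follow the Kähler proof essentially verbatim, now as equalities of numerical classes rather than as relations up to psef error terms.
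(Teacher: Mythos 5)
Your proof is correct and takes essentially the same route as the paper: the paper disposes of i) by citing the Segre-class formula at the beginning of Section 4.3 of \cite{fulton}, which in the smooth codimension-$2$ case is exactly the projective-bundle computation $g_*(\zeta\cap[E])=s_0(N_{W/X})\cap[W]=[W]$ that you carry out explicitly, and it derives ii)--iv) from i) by the same decomposition $\alpha=\pi^*\beta+aE$ and projection-formula manipulations used in the K\"ahler case (Lemma \ref{LemmaPullPushFormulaForOneBlowup}). You have merely supplied the details that the paper delegates to the references.
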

\begin{proof}
i) follows from the formula at the beginning of Section 4.3 in \cite{fulton}. Then ii), iii) and iv) follows from i) as in the proof of Lemma \ref{LemmaPullPushFormulaForOneBlowup}.
\end{proof}

\begin{proposition}
Let $X$ and $Y$ be projective manifolds, and $h:X\rightarrow Y$ a dominant rational map. Further, let $u\in N^1_{\mathbb{C}}(Y)$, then $h^*(u).h^*(\overline{u} )-h^*(u . \overline{u})\in N^2_{\mathbb{R}}(X)$ is effective.
\label{LemmaPullPushFormulaForMeromorphicMapsAlgebraicCase}\end{proposition}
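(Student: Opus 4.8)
The plan is to transcribe the proof of Proposition \ref{LemmaPullPushFormulaForMeromorphicMaps} into the algebraic language, replacing the word ``psef'' everywhere by ``effective'' and using Lemma \ref{LemmaPullPushFormulaForOneBlowupAlgebraicCase} in place of its analytic counterpart Lemma \ref{LemmaPullPushFormulaForOneBlowup}. As in the K\"ahler case I would first prove the auxiliary statement that for a finite composition of blowups along smooth centers $\pi :Z\rightarrow X$ and any $v\in N^1_{\mathbb{C}}(Z)$, the class $\pi_*(v).\pi_*(\overline{v})-\pi_*(v.\overline{v})\in N^2_{\mathbb{R}}(X)$ is effective, and then reduce the general rational map to this case via elimination of indeterminacies.

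For a single blowup I would split according to the codimension of the center $W$. When $W$ has codimension exactly $2$, Lemma \ref{LemmaPullPushFormulaForOneBlowupAlgebraicCase} iv) gives directly that $\pi_*(v).\pi_*(\overline{v})-\pi_*(v.\overline{v})=|\{v\}.\{L\}|^2 W$, which is a nonnegative real multiple of the effective cycle $W$ and hence effective. When $W$ has codimension at least $3$ one has $\pi_*(E.E)=0$: indeed $\pi_*(E.E)$ is a codimension $2$ class represented by a cycle supported on $W$, and since $\dim W<k-2$ such a cycle must vanish. Writing $\{v\}=\pi^*(\beta)+bE$ in $N^1_{\mathbb{C}}(Z)=\pi^*N^1_{\mathbb{C}}(X)\oplus \mathbb{C}\{E\}$ and expanding $\pi_*(v.\overline{v})$ with the projection formula (using $\pi_*E=0$) then shows the difference equals $|b|^2(-\pi_*(E.E))=0$, so it is trivially effective. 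The composition case follows by induction on the number of single blowups exactly as in Proposition \ref{LemmaPullPushFormulaForMeromorphicMaps}: decompose $\pi=\pi_1\circ\pi_2$ with $\pi_2$ a single blowup and $\pi_1$ a composition of fewer blowups, apply the inductive hypothesis to $\pi_1$ and the class $(\pi_2)_*(v)\in N^1_{\mathbb{C}}$, and conclude using the single blowup case for $\pi_2$ together with the fact that proper pushforward of cycles sends effective classes to effective classes, so that $(\pi_1)_*$ preserves the inequalities $\geq 0$.

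To treat a general dominant rational map $h:X\rightarrow Y$, I would invoke elimination of indeterminacies in characteristic zero: there are a projective manifold $\Gamma$, a morphism $p:\Gamma\rightarrow X$ which is a finite composition of blowups along smooth centers, and a morphism $g:\Gamma\rightarrow Y$ with $h=g\circ p^{-1}$. By the description of $h^*$ via Method 2 recalled above one has $h^*(u)=p_*(g^*(u))$, and since $g$ is a morphism its pullback is a ring homomorphism, so $h^*(u.\overline{u})=p_*(g^*(u).g^*(\overline{u}))=p_*(g^*(u).\overline{g^*(u)})$. Setting $v=g^*(u)\in N^1_{\mathbb{C}}(\Gamma)$ and applying the blowup case to $p$ yields
\begin{eqnarray*}
h^*(u).h^*(\overline{u})-h^*(u.\overline{u})=p_*(v).p_*(\overline{v})-p_*(v.\overline{v})\geq 0,
\end{eqnarray*}
which is the assertion.

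The step I expect to require the most care is the reduction in the last paragraph: one must ensure that the resolution can be chosen so that $p$ is genuinely a composition of blowups along \emph{smooth} centers rather than an arbitrary birational morphism, since the inductive blowup argument relies on Lemma \ref{LemmaPullPushFormulaForOneBlowupAlgebraicCase}, which is stated for smooth centers. This is precisely what Hironaka's elimination of indeterminacies supplies in characteristic zero, so the obstacle is not serious; the remaining bookkeeping---that the intersection products and pushforwards are compatible at the level of numerical equivalence classes and that effectivity is preserved throughout---is routine.
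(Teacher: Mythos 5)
Your proposal is correct and follows essentially the same route as the paper, whose proof of Proposition \ref{LemmaPullPushFormulaForMeromorphicMapsAlgebraicCase} simply states that the argument is identical to that of Proposition \ref{LemmaPullPushFormulaForMeromorphicMaps} with Hironaka's elimination of indeterminacies invoked in the characteristic-zero algebraic setting. The additional details you supply (the codimension $\geq 3$ case via $\pi_*(E.E)=0$, the induction on single blowups, and the reduction $h^*=p_*g^*$) are exactly the steps the paper leaves implicit, and they are carried out correctly.
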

\begin{proof}
The proof is identical with that of Proposition \ref{LemmaPullPushFormulaForMeromorphicMaps}, the only difference here is that we use Hironaka's elimination of indeterminacies for rational maps on projective manifolds over algebraic closed fields of characteristic zero (see e.g. Corollary 1.76 in Koll\'ar \cite{kollar} and Theorem 7.21 in Harris \cite{harris}). (In the algebraic case, the Hironaka's elimination of indeterminacies for a rational map $f:X->Y$ is a consequence of the basic monomialization theorem, applied to the ideal generated by the components of the map $f$ in an ambient projective space of $Y$. We thank )
\end{proof}

Now we state the analogs of Theorems \ref{TheoremFirstDynamicalDegreeIsSimple} and \ref{TheoremTheCaseNotStable}. We omit the proofs of these results here since they  are similar to those of Theorems \ref{TheoremFirstDynamicalDegreeIsSimple} and \ref{TheoremTheCaseNotStable}. 

\begin{theorem} Let $X\subset \mathbb{P}^N_K$ be a projective manifold of dimension $k$, and let $f:X\rightarrow X$ be a dominant rational map which is $1$-stable. Assume that $\lambda _1(f)^2>\lambda _2(f)$. Then $\lambda _1(f)$ is a simple eigenvalue of $f^*:N^1_{\mathbb{R}}(X)\rightarrow N^1_{\mathbb{R}}(X)$. Further, $\lambda _1(f)$ is the only eigenvalue of modulus greater than $\sqrt{\lambda _2(f)}$.  
\label{TheoremFirstDynamicalDegreeIsSimpleAlgebraicCase}\end{theorem}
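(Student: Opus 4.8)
The plan is to transcribe the proof of Theorem \ref{TheoremFirstDynamicalDegreeIsSimple} into the algebraic setting, replacing psef classes by effective classes in $N^2_{\mathbb{R}}(X)$, the Hodge index theorem by the Grothendieck--Hodge index theorem, and the convergence of currents (via Demailly regularization and masses) by the explicit degree and norm estimates recorded in Lemma \ref{LemmaSpectralRadius} and Lemma \ref{LemmaDegreeOfIntersections}.

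First I would rule out two non-collinear eigenvectors $u_1,u_2\in N^1_{\mathbb{C}}(X)$ with $f^*u_i=\tau_i u_i$ and $\tau:=\min\{|\tau_1|,|\tau_2|\}>\sqrt{\lambda_2(f)}$. Assuming they exist, fix an arbitrary $u=a_1u_1+a_2u_2$ in their span and set $v_n=\tfrac{a_1}{\tau_1^n}u_1+\tfrac{a_2}{\tau_2^n}u_2$, so that $(f^*)^n(v_n)=u$. Since $f$ is $1$-stable we have $(f^*)^n=(f^n)^*$ on $N^1_{\mathbb{C}}(X)$, and Proposition \ref{LemmaPullPushFormulaForMeromorphicMapsAlgebraicCase} gives
$$u.\overline{u}=(f^n)^*(v_n).(f^n)^*(\overline{v_n})\geq (f^n)^*(v_n.\overline{v_n}),$$
where $\geq$ means that the difference is an effective class in $N^2_{\mathbb{R}}(X)$. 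The goal is to deduce $\mathcal{H}(u,u)=\deg(u.\overline{u}.\omega_X^{k-2})\geq 0$ for every such $u$, which contradicts the Grothendieck--Hodge index theorem, since a Hermitian form of signature $(1,\rho-1)$ admits no $2$-dimensional positive semidefinite subspace.

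To control the right-hand side I would use the norm $\|\cdot\|_1$ of (\ref{LabelDefinitionNorm}). The intersection product satisfies $\|v.w\|_1\leq C\|v\|_1\|w\|_1$ by Lemma \ref{LemmaDegreeOfIntersections}, so $\|v_n.\overline{v_n}\|_1\leq C'\tau^{-2n}$; on the other hand, applying Lemma \ref{LemmaSpectralRadius} to $f^n$ with $p=2$ and using $\lambda_2(f)=\lim_n\deg((f^n)^*(\omega_X^2))^{1/n}$, one gets $\|(f^n)^*|_{N^2_{\mathbb{R}}(X)}\|_1\leq C(\lambda_2(f)+\varepsilon)^n$ for any fixed $\varepsilon>0$ and all large $n$. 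Choosing $\varepsilon$ with $\lambda_2(f)+\varepsilon<\tau^2$ forces $(f^n)^*(v_n.\overline{v_n})\to 0$ in $N^2_{\mathbb{R}}(X)$. Because an effective class of codimension $2$ pairs non-negatively with the effective class $\omega_X^{k-2}$, taking degrees against $\omega_X^{k-2}$ in the displayed inequality yields $\mathcal{H}(u,u)\geq\deg((f^n)^*(v_n.\overline{v_n}).\omega_X^{k-2})\to 0$, hence $\mathcal{H}(u,u)\geq 0$, the desired contradiction. This shows that $\lambda_1(f)=r_1(f)$ is the unique eigenvalue of $f^*$ on $N^1_{\mathbb{R}}(X)$ of modulus $>\sqrt{\lambda_2(f)}$.

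For simplicity I would again argue by contradiction via the Jordan form: if $\lambda_1(f)$ were not a simple root of the characteristic polynomial there would be non-collinear $u_1,u_2$ with $f^*u_1=\lambda_1(f)u_1$ and $f^*u_2=\lambda_1(f)u_2+u_1$. Taking $v_n=\tfrac{a_1}{\lambda_1(f)^n}u_1-\tfrac{na_2}{\lambda_1(f)^{n+1}}u_1+\tfrac{a_2}{\lambda_1(f)^n}u_2$ one checks $(f^*)^n(v_n)=u$, and now $\|v_n.\overline{v_n}\|_1\leq Cn^2\lambda_1(f)^{-2n}$; since $\lambda_1(f)^2>\lambda_2(f)$, the polynomial factor $n^2$ is still dominated and the same estimate gives $\mathcal{H}(u,u)\geq 0$ on a $2$-dimensional space, once more contradicting the Grothendieck--Hodge index theorem. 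The main obstacle is not conceptual but is precisely the translation of the analytic input used in the Kähler case into the numerical framework: one must ensure that $\|\cdot\|_1$ is submultiplicative under intersection and that $\|(f^n)^*|_{N^2_{\mathbb{R}}(X)}\|_1$ grows no faster than $(\lambda_2(f)+\varepsilon)^n$. Once these are in hand from Lemmas \ref{LemmaDegreeOfIntersections} and \ref{LemmaSpectralRadius}, the vanishing $(f^n)^*(v_n.\overline{v_n})\to 0$ is immediate and the remainder follows verbatim from the proof of Theorem \ref{TheoremFirstDynamicalDegreeIsSimple}.
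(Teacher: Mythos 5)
Your proposal is correct and follows exactly the route the paper intends: the paper omits this proof, stating only that it is similar to that of Theorem \ref{TheoremFirstDynamicalDegreeIsSimple}, and your transcription (Grothendieck--Hodge index theorem in place of the Hodge index theorem, effective classes in place of psef classes, Proposition \ref{LemmaPullPushFormulaForMeromorphicMapsAlgebraicCase} in place of Proposition \ref{LemmaPullPushFormulaForMeromorphicMaps}, and the $\|\cdot\|_1$-estimates from Lemmas \ref{LemmaDegreeOfIntersections} and \ref{LemmaSpectralRadius} in place of the convergence of currents) is precisely that translation. The only added value is that you make explicit the growth bound $\|(f^n)^*|_{N^2_{\mathbb{R}}(X)}\|_1\leq C(\lambda_2(f)+\varepsilon)^n$, which the paper leaves implicit.
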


\begin{theorem}
Let $X\subset \mathbb{P}^N_K$ be a projective manifold, and let $f:X\rightarrow X$ be a dominant rational map. Assume that $f^*:N^2_{\mathbb{R}}(X)\rightarrow N^2_{\mathbb{R}}(X)$ preserves the cone of effective classes. Then  

1) We have $r_1(f)^2\geq r_2(f)$.

2) Assume moreover that $r_1(f)^2>r_2(f)$. Then $r_1(f)$ is a simple eigenvalue of $f^*:N^1_{\mathbb{R}}(X)\rightarrow N^1_{\mathbb{R}}(X)$. Further, $r_1(f)$ is the only eigenvalue of modulus greater than $\sqrt{r_2(f)}$.  

\label{TheoremTheCaseNotStableAlgebraicCase}\end{theorem}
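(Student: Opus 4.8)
The plan is to prove Theorem \ref{TheoremTheCaseNotStableAlgebraicCase} by transporting the arguments used for Theorem \ref{TheoremTheCaseNotStable}, replacing every analytic ingredient by its algebraic counterpart established earlier in this section. The analog of Proposition \ref{LemmaPullPushFormulaForMeromorphicMaps} is Proposition \ref{LemmaPullPushFormulaForMeromorphicMapsAlgebraicCase}, which gives that $h^*(u).h^*(\overline{u})-h^*(u.\overline{u})$ is effective for $u\in N^1_{\mathbb{C}}(X)$; the analog of the Hodge index theorem is the Grothendieck-Hodge index theorem, giving that the Hermitian form $\mathcal{H}(u,v)=deg(u.\overline{v}.\omega_X^{k-2})$ on $N^1_{\mathbb{C}}(X)$ has signature $(1,\rho-1)$; and the role of the norm is played by $||\cdot||_1$ from (\ref{LabelDefinitionNorm}).

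For part 1) I would first verify the superadditivity $(f^*)^n(v).(f^*)^n(\overline{v})\geq (f^*)^n(v.\overline{v})$ for all $n$, where $\geq$ now means the difference is effective. This follows exactly as in the proof of Theorem \ref{TheoremTheCaseNotStable}: a single application of Proposition \ref{LemmaPullPushFormulaForMeromorphicMapsAlgebraicCase} gives $f^*(f^*(v)).f^*(\overline{f^*(v)})\geq f^*(f^*(v).f^*(\overline{v}))$, and then the hypothesis that $f^*:N^2_{\mathbb{R}}(X)\rightarrow N^2_{\mathbb{R}}(X)$ preserves effective classes lets me push $f^*$ through the inner product, inducting on $n$. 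Specializing to $v=\omega_X$ yields $(f^*)^n(\omega_X).(f^*)^n(\omega_X)\geq (f^*)^n(\omega_X^2)$. Using the remark that effective classes satisfy $||v||_1=deg(v)$, that any two norms on the finite-dimensional spaces $N^p_{\mathbb{R}}(X)$ are equivalent, and Lemma \ref{LemmaSpectralRadius} relating $||(f^n)^*_p||_1$ to $r_p(f^n)$, I obtain a constant $C>0$ independent of $n$ with
\begin{eqnarray*}
C^2\,||(f^n)^*|_{N^1_{\mathbb{R}}(X)}||_1^2\geq ||(f^n)^*|_{N^2_{\mathbb{R}}(X)}||_1,
\end{eqnarray*}
and taking $n$-th roots as $n\rightarrow\infty$ gives $r_1(f)^2\geq r_2(f)$.

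For part 2) I would follow the proof of Theorem \ref{TheoremFirstDynamicalDegreeIsSimple} verbatim, working with the spectral radius $r_1(f)$ in place of $\lambda_1(f)$ and $r_2(f)$ in place of $\lambda_2(f)$. Suppose there were two non-collinear eigenvectors $u_1,u_2\in N^1_{\mathbb{C}}(X)$ with $f^*u_i=\tau_i u_i$ and $\tau=\min\{|\tau_1|,|\tau_2|\}>\sqrt{r_2(f)}$. For $u=a_1u_1+a_2u_2$ I set $v_n=(a_1/\tau_1^n)u_1+(a_2/\tau_2^n)u_2$ so that $(f^*)^n(v_n)=u$, and the superadditivity from part 1) together with $||v_n||_1=O(\tau^{-n})$ and the bound $||(f^n)^*(v_n.\overline{v_n})||_1\leq ||(f^n)^*|_{N^2_{\mathbb{R}}(X)}||_1\,||v_n.\overline{v_n}||_1$ forces $(f^n)^*(v_n.\overline{v_n})\rightarrow 0$, hence $u.\overline{u}$ is effective and so $\mathcal{H}(u,u)=deg(u.\overline{u}.\omega_X^{k-2})\geq 0$ on a two-dimensional complex subspace, contradicting the signature $(1,\rho-1)$ from Grothendieck-Hodge. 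This shows $r_1(f)$ is the unique eigenvalue of modulus $>\sqrt{r_2(f)}$; simplicity follows by the same Jordan-block argument, taking $f^*u_1=r_1(f)u_1$, $f^*u_2=r_1(f)u_2+u_1$ and the modified $v_n$ as in that proof.

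The only genuine obstacle is that, unlike the Kähler case, here $r_1(f)$ need not itself be an eigenvalue of a positive (effective) eigenclass a priori, and one must be careful that the eigenvalue realizing $r_1(f)$ has modulus exactly $r_1(f)>\sqrt{r_2(f)}$ so that the argument applies; this is guaranteed once part 1) gives $r_1(f)^2>r_2(f)$ by hypothesis, so that any eigenvalue of maximal modulus $r_1(f)$ lands in the regime the signature argument controls. Beyond this bookkeeping, every step is a direct transcription, so I expect no new difficulty.
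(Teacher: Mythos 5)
Your proposal matches the paper's intent exactly: the paper omits the proof of Theorem \ref{TheoremTheCaseNotStableAlgebraicCase} precisely because it is the verbatim transcription of the proofs of Theorems \ref{TheoremFirstDynamicalDegreeIsSimple} and \ref{TheoremTheCaseNotStable}, with Proposition \ref{LemmaPullPushFormulaForMeromorphicMapsAlgebraicCase} replacing Proposition \ref{LemmaPullPushFormulaForMeromorphicMaps}, the Grothendieck--Hodge index theorem replacing the Hodge index theorem, and the norm $||\cdot||_1$ together with Lemma \ref{LemmaSpectralRadius} replacing the analytic norm estimates. Your transcription carries out exactly this substitution, so it is correct and follows the same route as the paper.
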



\begin{thebibliography}{xx} 
\bibitem{bayraktar}{T. Bayraktar,} \textit{Green currents for meromorphic maps of compact K\"ahler manifolds,} J. Geom. Anal., to appear. arXiv: 1107.3063.

\bibitem{bedford-kim}{E. Bedford and K.-H. Kim,} \textit{Pseudo-automorphisms of $3$-space: periodicities and positive entropy in linear fractional recurrences,} arXiv: 1101.1614.

\bibitem{blanc}{J. Blanc,} \textit{Dynamical degrees of (pseudo)-automorphisms fixing cubic hypersurfaces,} Indiana Univ. J. Math., to appear. arXiv: 1204.4256.

\bibitem{cantat-zeghib}{S. Cantat and A. Zeghib,} \textit{Holomorphic actions, Kummer examples, and Zimmer program,} Annales Sc. de l'ENS 45 (2012), no. 3, 447--489.

\bibitem{boucksom-favre-jonsson}{S. Boucksom, C. Favre and M. Jonsson} \textit{Degree growth of meromorphic surface maps,} Duke Math. J. 141 (2008), no. 3, 519--538.

\bibitem{demailly}{J.-P. Demailly,} \textit{Regularization of closed positive currents and intersection theory,} J. Algebraic Geom. 1 (1992), no. 3, 361--409.

\bibitem{diller-favre}{J. Diller and C. Favre,} \textit{Dynamics of bimeromorphic maps of surfaces,} Amer. J. Math. 123 (2001), no. 6, 1135--1169.

\bibitem{diller-guedj}{J. Diller and V. Guedj,} \textit{Regularity of dynamical Green's functions,} Trans. AMS. 361 (2009), no. 9, 4783--4805.

\bibitem{dinh-nguyen}{T-C Dinh and V.-A. Nguyen,} \textit{Mixed Hodge-Riemman theorem for compact K\"ahler manifolds,} GAFA 16 (2006), 836--849.

\bibitem{dinh-sibony1}{T-C Dinh and N. Sibony,} \textit{Regularization of currents and entropy,} Ann. Sci. Ecole Norm. Sup. (4), 37 (2004), no 6, 959--971.

\bibitem{dinh-sibony10}{T-C Dinh and N. Sibony,} \textit{Une borne sup\'erieure de l'entropie topologique d'une application rationnelle,} Annals of Math., 161 (2005), 1637--1644. 

\bibitem{dinh-sibony3}{T-C Dinh and N. Sibony,} \textit{Pullback of currents by holomorphic maps,} Manuscripta Math. 123 (2007), no. 3, 357--371.

\bibitem{dolgachev-ortland}{I. Dolgachev and D. Ortland,} \textit{Point sets in projective spaces and theta functions,} Ast\'erisque, Vol 165 (1988).

\bibitem{fornaess-sibony1}{J. E. Fornaess and N. Sibony,} \textit{Complex dynamics in higher dimensions,} Notes partially written by Estela A. Gavosto. NATO Adv. Sci. Inst. Ser. C Math. Phys. Sci. 459, Complex potential theory (Montreal, PQ, 1993), 131--186, Kluwer Acad. Publ., Dordrecht, 1994. 

\bibitem{friedland}{S. Friedland,} \textit{Entropy of polynomial and rational maps,} Annals of Math. 133 (1991), 359--368. 

\bibitem{friedlander-lawson}{E. M. Friedlander and H. B. Lawson,} \textit{Moving algebraic cycles of bounded degree,} Invent. math. 132 (1998), 91--119. 

\bibitem{fulton}{W. Fulton,} \textit{Intersection theory,} 2nd edition, Springer-Verlag Berlin Heidelberg, 1998.

\bibitem{griffiths-harris}{P. Griffiths and J. Harris,} \textit{Principles of algebraic geometry,} 1978, John Wiley and Sons, Inc.

\bibitem{grothendieck}{A. Grothendieck,} \textit{Sur une note de Mattuck-Tate,} J. reine angew Math. 20 (1958), 208--215.

\bibitem{guedj3}{V. Guedj,} \textit{Decay of volumes under iteration of meromorphic mappings,} Ann. Inst. Fourier (Grenoble) 54 (2004), no. 7, 2369--2386.

\bibitem{guedj4}{V. Guedj,} \textit{Ergodic properties of rational mappings with large topological degrees,} Annals of Math. 161 (2005), 1589--1607.

\bibitem{guedj5}{V. Guedj,} \textit{Propri\'et'es ergodiques des applications rationnelles,} in Quelques aspects des systemes dynamiques polynomiaux, Paronamas et Syntheses, 30. Soci\'et\'e Math\'ematique de France, Paris, 2010. 

\bibitem{harris}{J. Harris,} \textit{Algebraic geometry: a first course,} Springer-Verlag New York, 1992.

\bibitem{hironaka}{H. Hironaka,} \textit{Flattening of analytic maps,} Manifolds-Tokyo 1973 (Proc. International Conf., Tokyo 1973), Univ. Tokyo Press, 1975, pp. 313--321.

\bibitem{ishii-milman}{S. Ishii and P. Milman,} \textit{The geometric minimal models of analytic spaces,} Math. Ann. 323 (2002), no 3, 437--451.

\bibitem{kollar}{J. Koll\'ar,} \textit{Lectures on resolutions of singularities,} Annals of mathematics studies, Princeton University press, 2007.

\bibitem{moishezon}{B. Moishezon,} \textit{Modifications of complex varieties and the Chow lemma,} Lecture Notes in Mathematics, no. 412, Classification of algebraic varieties and compact complex manifolds, Springer-Verlag Heidelberg 1974, pp. 133--139.

\bibitem{oguiso2}{K. Oguiso,} \textit{Automorphism groups of Calabi-Yau manifolds of Picard number two,} arXiv: 1206.1649.

\bibitem{perroni-zhang}{F. Perroni and D.-Q. Zhang,} \textit{Pseudo-automorphisms of positive entropy on the blowups of products of projective spaces,}  arXiv:1111.3546.

\bibitem{roberts}{J. Roberts,} \textit{Chow's moving lemma,} in Algebraic geometry, Oslo 1970, F. Oort (ed.), Wolters-Noordhoff Publ. Groningnen (1972), 89--96.

\bibitem{russakovskii-shiffman}{A. Russakovskii and B. Shiffman,} \textit{Value distributions for sequences of rational mappings and complex dynamics,} Indiana Univ. Math. J. 46 (1997), 897--932.

\bibitem{shafarevich}{I. R. Shafarevich,} \textit{Basic algebraic geometry 1,} 2nd revised and expanded version, Springer-Verlag Berlin Heidelberg New York 1994.

\end{thebibliography}
\end{document}